\journalname{JOTA}
\numberwithin{equation}{section}
\newtheorem{lem}{Lemma}[section]
\newtheorem{thm}{Theorem}[section]
\newtheorem{cor}{Corollary}[section]
\newtheorem{conj}{Conjecture}[section]
\newtheorem{rem}{Remark}[section]
\begin{document}
\title{Trust-region and $p$-regularized subproblems:  local nonglobal minimum is the second smallest objective function value among all first-order stationary points
\thanks{This research was supported by the Beijing Natural Science Foundation, grant Z180005 and by the National Natural Science Foundation of China under grants 11822103, 11571029, and 11771056.}
}

\titlerunning{On local nonglobal minimiums of (TRS) and (p-RS)}        

\author{Jiulin Wang
 \and Mengmeng Song
\and Yong Xia}


\institute{
Jiulin Wang
 \and Mengmeng Song  \and Yong Xia \at
              LMIB of the Ministry of Education, School of Mathematical Sciences, Beihang University, Beijing, 100191, P. R. China \\
              \email{(wangjiulin@buaa.edu.cn (J. L. Wang); songmengmeng@buaa.edu.cn (M. M. Song); yxia@buaa.edu.cn (Y. Xia, corresponding author) )}
}

\date{Received: date / Accepted: date}

\maketitle
\begin{abstract}
The local nonglobal minimizer of  trust-region subproblem, if it exists, is shown to have the second smallest objective function value among all KKT points. This new property is extended to $p$-regularized subproblem. As a corollary, we show for the first time that finding the local nonglobal minimizer of Nesterov-Polyak subproblem corresponds to a generalized eigenvalue problem.

\keywords{Trust-region subproblem \and $p$-regularized subproblem \and Nesterov-Polyak subproblem \and Local nonglobal minimizer \and Monotonicity property}
\subclass{90C20, 90C26, 90C30}
\end{abstract}
\section{Introduction}\label{sec1}
The trust-region subproblem (TRS) is to minimize a (possibly nonconvex) quadratic function over a Euclidean ball. It is an essential problem in each iteration of the trust region method for solving nonlinear programming problems \cite{Y15}. (TRS) has other applications in convex quadratic integer programming \cite{B15},
bounded linear regression, best rank-$1$ tensor approximation, and image deconvolution \cite{P20}.

Nonconvex (TRS) has the property of zero Lagrangian-duality gap, see the recent survey on hidden convexity \cite{X20}. There is a necessary and sufficient optimality condition for the global minimizer, established in the early 1980s \cite{G81,S82,M83}. In 1994, Mart\'{i}nez \cite{M94} proved that (TRS) has at most one local nonglobal minimizer. In 2020, Wang and Xia \cite{WX20} established the necessary and sufficient optimality condition for the local nonglobal minimizer.

Besides the second-order optimality conditions for local and global minimizers,
first-order optimality (KKT) conditions are deeply investigated for (TRS), see for example \cite{LPR98}. Gander \cite{G80} showed that the objective function values of KKT points  of linear least squares problem with a quadratic equality constraint  are monotonic in terms of their Lagrangian multipliers. In order to locate local solutions of the Celis-Dennis-Tapia (CDT) subproblem \cite{C85} which minimizes a quadratic function over the intersection of two ellipsoids, Chen and Yuan \cite{CY00} studied the monotonicity property for KKT points of (CDT) in a partial order relation of corresponding Lagrangian multipliers.

The $p$-regularized subproblem ($p$-RS) is an unconstrained optimization problem of minimizing a (possibly nonconvex) quadratic function with an additional $p$-th power regularization term of the norm of the variables \cite{GRT10,Hsia17}. In literature, if the regularization term is cubic, ($p$-RS) is known as Nesterov-Polyak subproblem \cite{N06}. For more references, we refer to \cite{W07,C11,CGT11,L20} and references therein. If the regularization term is quartic, ($p$-RS) reduces to the double-well potential optimization, which has particular applications in solid mechanics and quantum mechanics \cite{F17,X17}. 


In this paper, based on the monotonicity properties for first-order stationary points, we prove that the local nonglobal minimizer of (TRS), if it exists, has the second smallest objective function value among all KKT points. We then extend the new property to  ($p$-RS) and show that the local nonglobal minimizer of (TRS) has the smallest objective function value among all critical points. As a corollary,  
we show for the first time that the local nonglobal minimizer of the Nesterov-Polyak subproblem  could be founded by solving a generalized eigenvalue problem.

The rest of this paper is organized as follows. In Section \ref{sec2}, we review some properties for local minimizers of (TRS) and ($p$-RS). In Section \ref{sec3} and \ref{sec4}, we first show that monotonicity properties hold for first-order stationary points of (TRS) and ($p$-RS), respectively. As the main result of this paper, we prove that the local nonglobal minimums of (TRS) and ($p$-RS) are the second smallest objective function value among all first-order stationary points, respectively.  Conclusion and discussions are made in Section \ref{sec6}. 


\section{Preliminaries}\label{sec2}
In this section, we review some characterizations for local minimizers of (TRS) and ($p$-RS).
\subsection{Characterizations for local minimizers of (TRS)}\label{subsec2.1}
In this subsection, we briefly review some properties of local minimizers of nonconvex (TRS):
\begin{eqnarray}
(\rm TRS)~~&\min & f(x)=\frac{1}{2}x^TQx+c^Tx \nonumber\\
&{\rm s.t.}& x^Tx-1\le 0,\nonumber
\end{eqnarray}
where $Q \in \mathbb{R}^{n\times n}$ is not positive semidefinite and $c \in \mathbb{R}^{n}$.
Up to an eigenvalue decomposition, we can always assume that $Q$ is a diagonal matrix with $n$ diagonal elements being $(0>)~ \alpha_1\le\alpha_2\le\cdots\le\alpha_n$.

$(x;\lambda)\in \mathbb{R}^{n}\times\mathbb{R}$ is a KKT point of (TRS) if and only if it satisfies the following KKT system:
\begin{eqnarray}
&&Qx+c=-\lambda x, \label{kkt0}\\
&&\lambda(x^Tx-1)=0, \label{kkt1}\\
&&x^Tx-1\le0,~~\lambda\ge 0. \label{kkt2}
\end{eqnarray}
In case of $\lambda>0$,  \eqref{kkt1}-\eqref{kkt2} is equivalent to
\begin{equation}
x^Tx-1=0. \label{kkt3}
 \end{equation}
Suppose $-\lambda\not\in\{\alpha_1,\alpha_2,\cdots,\alpha_n\}$.
Then by substituting the $x$-solution of (\ref{kkt0}) into (\ref{kkt3}), we obtain that $\lambda$ must be  a zero point of the following  so-called {\it secular function}:
\begin{equation}
\varphi(\lambda)=\sum_{i=1}^n\frac{c_i^2}{(\alpha_i+\lambda)^2}-1.\label{secular}
\end{equation}
The first and second derivatives of $\varphi(\lambda)$ can be written as follows:
\begin{eqnarray}
&&\varphi'(\lambda)=-2\sum_{i=1}^n\frac{c_i^2}{(\alpha_i+\lambda)^3},\nonumber\\
&&\varphi''(\lambda)=6\sum_{i=1}^n\frac{c_i^2}{(\alpha_i+\lambda)^4}.\nonumber
\end{eqnarray}
If $\varphi(\lambda)$ has zero points, then $c\neq0$ and hence $\varphi''(\lambda)>0$ on each nonempty interval $(-\alpha_{i+1},-\alpha_{i})$ for $i\in\{1,2,\cdots,n-1\}$. It follows that, on each interval,
$\varphi(\lambda)$ is strongly convex and thus has at most two zero points.

In the early 1980s, necessary and sufficient conditions for global minimizers of (TRS) are established by Gay \cite{G81}, Sorensen \cite{S82}, Mor\'{e} and Sorensen \cite{M83}.
\begin{lem}[\cite{G81}, Theorem 2.1]\label{lem:g}
$x^*$ is a global minimizer of (TRS) if and only if ${x^*}^Tx^*=1$ and there is a $\lambda^*\in \mathbb{R}$ such that (\ref{kkt0}) holds and
\[
\lambda^*\ge -\alpha_1(>0).
\]
\end{lem}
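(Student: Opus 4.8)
The plan is to translate the eigenvalue inequality into the matrix condition $Q+\lambda^* I\succeq 0$ --- legitimate because $Q$ is diagonal with smallest entry $\alpha_1$, so the smallest eigenvalue of $Q+\lambda^* I$ equals $\alpha_1+\lambda^*$, and $\lambda^*\ge-\alpha_1$ is exactly $Q+\lambda^* I\succeq 0$ --- and then to base both implications on one completion-of-squares identity. Assuming ${x^*}^Tx^*=1$ together with the stationarity relation \eqref{kkt0}, i.e. $c=-(Q+\lambda^* I)x^*$, I would substitute $c$ into $f(x)-f(x^*)$ and regroup to obtain, for every $x\in\mathbb{R}^n$,
\[
f(x)-f(x^*)=\tfrac12(x-x^*)^T(Q+\lambda^* I)(x-x^*)+\tfrac{\lambda^*}{2}\bigl(1-x^Tx\bigr).
\]
Verifying this is a routine algebraic computation, and it is the workhorse for both directions.

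For sufficiency, the identity gives the result almost immediately: $\lambda^*\ge-\alpha_1$ makes the matrix $Q+\lambda^* I$ positive semidefinite, so the first term is nonnegative; and since $\alpha_1<0$ forces $-\alpha_1>0$ we have $\lambda^*>0$, whence the second term $\tfrac{\lambda^*}{2}(1-x^Tx)$ is nonnegative for every feasible $x$ (those with $x^Tx\le1$). Thus $f(x)\ge f(x^*)$ on the feasible ball and $x^*$ is a global minimizer.

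For necessity I would first argue that the minimizer lies on the boundary: because $Q$ is not positive semidefinite, $f$ admits no interior local minimizer (the unconstrained second-order condition $Q\succeq 0$ fails), so ${x^*}^Tx^*=1$; since the single active constraint has nonvanishing gradient $2x^*$, the first-order KKT conditions then furnish a multiplier $\lambda^*\ge 0$ satisfying \eqref{kkt0}. The decisive remaining claim is $Q+\lambda^* I\succeq 0$, which also yields $\lambda^*\ge-\alpha_1>0$ for free and thereby the strict positivity asserted in the statement. To obtain it I would restrict the identity to the sphere $x^Tx=1$, where the last term vanishes, so global optimality of $x^*$ forces
\[
(x-x^*)^T(Q+\lambda^* I)(x-x^*)\ge 0\qquad\text{for all }x\text{ with }x^Tx=1.
\]

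The main obstacle is upgrading this sphere inequality to full positive semidefiniteness, since a priori it only controls the quadratic form along the vectors $d=x-x^*$. The key observation is that every such $d$ satisfies $x^{*T}d=-\tfrac12\|d\|^2$, so the attainable unit directions $\hat d=d/\|d\|$ are exactly those of the open hemisphere $\{\hat d:\hat d^Tx^*<0\}$. Combining this with the evenness of the quadratic form --- for any unit vector at least one of $\pm\hat d$ lies in the (closed) hemisphere, and both give the same value --- upgrades the inequality to $\hat d^T(Q+\lambda^* I)\hat d\ge 0$ for all directions, with the equatorial case $\hat d^Tx^*=0$ recovered by continuity. This gives $Q+\lambda^* I\succeq 0$, equivalently $\lambda^*\ge-\alpha_1$, completing the proof. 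I expect this hemisphere-covering step, not any calculation, to be the conceptually delicate point, as it is precisely what converts the boundary/second-order information into a global curvature condition.
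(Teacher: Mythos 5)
Your proof is correct: the completion-of-squares identity checks out, the sufficiency direction is immediate from it, and the hemisphere-covering argument (every unit direction $\hat d$ with ${x^*}^T\hat d<0$ is attainable as a normalized chord of the sphere, plus evenness and continuity for the equator) is exactly the right way to upgrade the boundary inequality to $Q+\lambda^* I\succeq 0$. The paper does not prove this lemma---it is quoted from Gay's 1981 paper---and your argument is essentially the classical proof given there and in Sorensen/Mor\'e--Sorensen, so there is nothing to reconcile.
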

In 1994, Mart\'{\i}nez \cite{M94} identified two cases where
the local nonglobal minimizer of (TRS) does not exist.
\begin{lem}[\cite{M94}, Lemmas 3.2, 3.3] \label{lem:l0}
Suppose either $\alpha_1=\alpha_2$ or $c_1=0$, there is no local nonglobal minimizer of (TRS).
\end{lem}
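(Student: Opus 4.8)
The plan is to argue by contradiction and to derive, in each of the two cases, a violation of the second-order necessary optimality condition. First I would record the structural consequences of $\hat{x}$ being a local nonglobal minimizer with multiplier $\hat{\lambda}$. Since $Q$ is indefinite ($\alpha_1<0$), no interior point can be a local minimizer, because an interior minimizer would force $\nabla^2 f=Q\succeq 0$; hence $\hat{x}$ lies on the boundary, $\hat{x}^T\hat{x}=1$, and the KKT system \eqref{kkt0}--\eqref{kkt2} holds with $\hat{\lambda}\ge 0$. The single ball constraint satisfies LICQ on the boundary (its gradient $2\hat{x}\neq 0$), so the second-order necessary condition applies: the Hessian of the Lagrangian $Q+\hat{\lambda}I$ is positive semidefinite on the tangent space $T:=\{d:\hat{x}^Td=0\}$, that is, $d^T(Q+\hat{\lambda}I)d\ge 0$ for every $d\in T$. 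Finally, because $\hat{x}$ is \emph{not} global, the contrapositive of Lemma \ref{lem:g} yields the strict inequality $\hat{\lambda}<-\alpha_1$, so that $\alpha_1+\hat{\lambda}<0$. This single strict inequality is the key lever for both cases.

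With these facts in hand I would exhibit a tangent direction of negative curvature. In the case $c_1=0$, reading the first coordinate of \eqref{kkt0} gives $(\alpha_1+\hat{\lambda})\hat{x}_1=-c_1=0$; since $\alpha_1+\hat{\lambda}<0$ is nonzero, this forces $\hat{x}_1=0$. Then the first standard basis vector $e_1$ satisfies $\hat{x}^Te_1=\hat{x}_1=0$, so $e_1\in T$, yet $e_1^T(Q+\hat{\lambda}I)e_1=\alpha_1+\hat{\lambda}<0$, contradicting the second-order condition. In the case $\alpha_1=\alpha_2$, we have $\alpha_2+\hat{\lambda}=\alpha_1+\hat{\lambda}<0$, so $Q+\hat{\lambda}I$ is negative definite on the two-dimensional subspace $S:=\mathrm{span}\{e_1,e_2\}$. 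A dimension count gives $\dim(S\cap T)\ge \dim S+\dim T-n=2+(n-1)-n=1$, so there is a nonzero $d\in S\cap T$ with $d\in T$ but $d^T(Q+\hat{\lambda}I)d<0$, again contradicting the second-order condition. In either case the contradiction shows that no local nonglobal minimizer can exist.

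The hard part will not be the two case analyses themselves, which reduce respectively to reading off a coordinate and to a one-line dimension count; rather, the delicate points are the two preliminary reductions. First, one must justify carefully that a local nonglobal minimizer sits on the boundary with a genuine KKT multiplier $\hat{\lambda}\ge 0$ (ruling out interior critical points via the indefiniteness of $Q$). Second, and most importantly, one must extract the \emph{strict} inequality $\alpha_1+\hat{\lambda}<0$ from Lemma \ref{lem:g}: if $\hat{\lambda}\ge -\alpha_1$ held, then $\hat{x}$ would be global, contradicting the hypothesis. Once that strictness is secured, the second-order necessary condition $d^T(Q+\hat{\lambda}I)d\ge 0$ on $\hat{x}^\perp$ does all the remaining work, and the secular-function machinery of \eqref{secular} is not even needed for this particular statement.
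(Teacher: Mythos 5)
Your proof is correct. The paper itself gives no proof of this lemma --- it is imported verbatim from Mart\'{\i}nez \cite{M94} --- so there is nothing internal to compare against; your argument (boundary location via indefiniteness of $Q$, the strict bound $\hat{\lambda}<-\alpha_1$ from the contrapositive of Lemma \ref{lem:g}, and a negative-curvature tangent direction violating the second-order necessary condition, with $e_1$ in the case $c_1=0$ and a dimension count in the case $\alpha_1=\alpha_2$) is the standard route and is essentially the one in Mart\'{\i}nez's original Lemmas 3.2--3.3. The one step worth stating explicitly rather than in passing is that $T=\{d:\hat{x}^Td=0\}$ is contained in the critical cone even when $\hat{\lambda}=0$, so the condition $d^T(Q+\hat{\lambda}I)d\ge 0$ on $T$ is legitimately available in all cases; as you note, everything else is bookkeeping.
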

In the same paper, Mart\'{\i}nez \cite{M94} proved that (TRS) has at most one local nonglobal minimizer based on the following detailed characterization.
\begin{lem}[\cite{M94}, Theorem 3.1(i)] \label{lem:l1}
If $\underline{x}$ is a local nonglobal minimizer of (TRS), then there is a nonnegative $\underline{\lambda}\in(-\alpha_2,-\alpha_1)$ such that (\ref{kkt0}) holds, and $\underline{\lambda}$ is a zero point of $\varphi(\lambda)$ and
\[
\varphi'(\underline{\lambda})\ge 0.
\]
\end{lem}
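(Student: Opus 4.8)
The plan is to show that a local nonglobal minimizer $\underline{x}$ is a KKT point whose multiplier $\underline{\lambda}$ is pinned into the open interval $(-\alpha_2,-\alpha_1)$, and then to read off the two analytic conclusions $\varphi(\underline{\lambda})=0$ and $\varphi'(\underline{\lambda})\ge 0$ from the first- and second-order optimality conditions. First I would record that, being a local minimizer, $\underline{x}$ satisfies \eqref{kkt0}--\eqref{kkt2} for some $\underline{\lambda}\ge 0$. I claim the constraint is active with $\underline{\lambda}>0$: if $\underline{\lambda}=0$ then $\nabla f(\underline{x})=Q\underline{x}+c=0$, and since $Q$ is not positive semidefinite ($\alpha_1<0$) one can exhibit a feasible descent direction at $\underline{x}$ (move along $\pm e_1$, choosing the sign so as to stay inside the ball, or tangentially if $\underline{x}_1=0$), contradicting local minimality. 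Hence \eqref{kkt3} holds and $\underline{x}^T\underline{x}=1$.

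Next I would locate $\underline{\lambda}$. For the upper bound, if $\underline{\lambda}\ge -\alpha_1$ then $\alpha_i+\underline{\lambda}\ge\alpha_i-\alpha_1\ge 0$ for all $i$, so $Q+\underline{\lambda}I\succeq 0$ and Lemma~\ref{lem:g} would force $\underline{x}$ to be a global minimizer, contradicting the hypothesis; thus $\alpha_1+\underline{\lambda}<0$, i.e.\ $\underline{\lambda}<-\alpha_1$. For the lower bound I would invoke the second-order necessary condition: with the constraint active and $\underline{\lambda}>0$ the critical cone is the tangent hyperplane $\{d:\underline{x}^Td=0\}$, and the Hessian of the Lagrangian $A:=Q+\underline{\lambda}I=\mathrm{diag}(\alpha_i+\underline{\lambda})$ must satisfy $d^TAd\ge 0$ there. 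Since $a_1:=\alpha_1+\underline{\lambda}<0$, the matrix $A$ already has a negative eigenvalue; if it had two, a two-dimensional negative-definite subspace would meet the codimension-one tangent space nontrivially and violate the second-order condition. Hence $A$ has exactly one negative eigenvalue, forcing $a_2=\alpha_2+\underline{\lambda}\ge 0$, i.e.\ $\underline{\lambda}\ge-\alpha_2$. Granting the strictness discussed below, $\underline{\lambda}\in(-\alpha_2,-\alpha_1)$ and in particular $-\underline{\lambda}\notin\{\alpha_1,\dots,\alpha_n\}$, so the $x$-solution of \eqref{kkt0} is $\underline{x}_i=-c_i/(\alpha_i+\underline{\lambda})$ and substitution into $\underline{x}^T\underline{x}=1$ gives exactly $\varphi(\underline{\lambda})=0$.

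It remains to obtain $\varphi'(\underline{\lambda})\ge 0$. Using $c_i=-(\alpha_i+\underline{\lambda})\underline{x}_i$ one computes
\[
\varphi'(\underline{\lambda})=-2\sum_{i=1}^n\frac{\underline{x}_i^2}{\alpha_i+\underline{\lambda}}=-2\,\underline{x}^TA^{-1}\underline{x},
\]
so the claim is equivalent to $\underline{x}^TA^{-1}\underline{x}\le 0$. I would prove this directly from the second-order condition: suppose instead $\underline{x}^TA^{-1}\underline{x}>0$, and set $y=A^{-1}\underline{x}$ and $\beta=\underline{x}_1/(\underline{x}^Ty)$. The vector $d=e_1-\beta y$ lies in the tangent hyperplane, since $\underline{x}^Td=\underline{x}_1-\beta(\underline{x}^Ty)=0$, and using $Ay=\underline{x}$, $e_1^TAe_1=a_1$ and $e_1^T\underline{x}=\underline{x}_1$ a short computation gives
\[
d^TAd=a_1-\frac{\underline{x}_1^2}{\underline{x}^Ty}<a_1<0,
\]
because $\underline{x}_1=-c_1/a_1\neq 0$ (as $c_1\neq 0$ by Lemma~\ref{lem:l0}) and $\underline{x}^Ty>0$ by assumption. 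This contradicts $d^TAd\ge 0$, so $\underline{x}^TA^{-1}\underline{x}\le 0$ and hence $\varphi'(\underline{\lambda})\ge 0$.

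The hardest part is the strictness $\underline{\lambda}>-\alpha_2$, equivalently the exclusion of the singular case $a_2=0$. Here $A$ has a nontrivial kernel, KKT-solvability of \eqref{kkt0} forces $c_i=0$ on the index set $\{i:\alpha_i=\alpha_2\}$, and $\alpha_2=-\underline{\lambda}<0$; the invertibility used in the previous step breaks down, and the single tangential second-order test along a null direction turns out to be inconclusive. I would resolve it by showing this configuration is incompatible with $\underline{x}$ being a local minimizer: the reflection $x'=\underline{x}-2\underline{x}_{i_0}e_{i_0}$ with $i_0$ in that index set is feasible with $f(x')=f(\underline{x})$, and a careful analysis of the flat null direction $e_{i_0}$ together with the negative direction $e_1$ (using the curvature of the sphere to couple them) should produce a genuine descent path. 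This case analysis, rather than the sign condition on $\varphi'$, is where I expect the real care to be needed.
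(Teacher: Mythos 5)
The paper itself offers no proof of this lemma --- it is imported verbatim from Mart\'{\i}nez \cite{M94}, whose own argument goes through a local parametrization of the sphere (eliminating $x_1$ and studying the reduced Hessian). Your route --- KKT conditions, the second-order necessary condition $d^T(Q+\underline{\lambda}I)d\ge 0$ on the tangent hyperplane, an eigenvalue count to force $\underline{\lambda}\in[-\alpha_2,-\alpha_1)$, and the tangent vector $d=e_1-\beta A^{-1}\underline{x}$ to extract $\varphi'(\underline{\lambda})\ge 0$ --- is a legitimate alternative, and those steps all check out (the degenerate possibility $d=0$ occurs only when $\underline{x}\parallel e_1$, where $\underline{x}^TA^{-1}\underline{x}=\underline{x}_1^2/a_1<0$ already contradicts the assumption, so it is vacuous; the use of Lemmas \ref{lem:g}, \ref{lem:l0} is legitimate since they precede this statement).

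The one genuine gap is the one you flag yourself: ruling out $\underline{\lambda}=-\alpha_2$. Saying a descent path ``should'' exist is not a proof, and the reflection $x'=\underline{x}-2\underline{x}_{i_0}e_{i_0}$ is a red herring --- it is not a nearby point, and a distant point with equal objective value contradicts nothing about local minimality. The case does close along the lines you sketch, in two sub-cases. Let $a_i=\alpha_i+\underline{\lambda}$ and pick $i_0$ with $\alpha_{i_0}=\alpha_2$, so $a_{i_0}=0$ and $c_{i_0}=0$, while $\underline{x}_1=-c_1/a_1\neq 0$. If $\underline{x}_{i_0}\neq 0$, then $d=e_1-(\underline{x}_1/\underline{x}_{i_0})e_{i_0}$ is tangent to the sphere at $\underline{x}$ and $d^TAd=a_1<0$, contradicting the second-order necessary condition. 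If $\underline{x}_{i_0}=0$, take the feasible curve $x(\epsilon)=\underline{x}+\epsilon e_{i_0}+\delta(\epsilon)e_1$ with $\delta(\epsilon)=-\tfrac{\epsilon^2}{2\underline{x}_1}-\tfrac{\epsilon^4}{8\underline{x}_1^3}+O(\epsilon^6)$ chosen so that $\|x(\epsilon)\|=1$; using $Q\underline{x}+c=-\underline{\lambda}\underline{x}$ one gets
\[
f(x(\epsilon))-f(\underline{x})=\frac{\epsilon^2}{2}\left(\underline{\lambda}+\alpha_2\right)+\frac{\epsilon^4}{8\underline{x}_1^2}\left(\underline{\lambda}+\alpha_1\right)+O(\epsilon^6)=\frac{\alpha_1-\alpha_2}{8\underline{x}_1^2}\,\epsilon^4+O(\epsilon^6)<0,
\]
since $\alpha_1<\alpha_2$ by Lemma \ref{lem:l0}. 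This contradicts local minimality, so $\underline{\lambda}>-\alpha_2$. With this quartic expansion supplied, your proof is complete.
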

Recently, Lemma \ref{lem:l1} has been updated by Wang and Xia \cite{WX20}.
\begin{lem}[\cite{WX20}, Theorem 3.1] \label{lem:l10}
$\underline{x}$ is a local nonglobal minimizer of (TRS),  if and only if there is a nonnegative $\underline{\lambda}\in(-\alpha_2,-\alpha_1)$ such that (\ref{kkt0}) holds, and $\underline{\lambda}$ is a zero point of $\varphi(\lambda)$ and
\[
\varphi'(\underline{\lambda})> 0.\nonumber
\]
\end{lem}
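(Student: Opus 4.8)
The plan is to sharpen Mart\'inez's necessary conditions (Lemma~\ref{lem:l1}) into an exact characterization by completing the second-order analysis on the unit sphere. By Lemma~\ref{lem:l0} I may assume $\alpha_1<\alpha_2$ and $c_1\neq0$, since otherwise both sides of the equivalence fail (the interval $(-\alpha_2,-\alpha_1)$ is empty, or else $\varphi'<0$ throughout it). For the ``only if'' direction, Lemma~\ref{lem:l1} already supplies a multiplier $\underline{\lambda}\in(-\alpha_2,-\alpha_1)$ with $\underline{\lambda}\ge0$ satisfying \eqref{kkt0}, $\varphi(\underline{\lambda})=0$, and $\varphi'(\underline{\lambda})\ge0$; since $\underline{\lambda}\notin\{-\alpha_1,\dots,-\alpha_n\}$, the relation $\varphi(\underline{\lambda})=0$ forces $\|\underline{x}\|=1$, so the constraint is active. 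Setting $A:=Q+\underline{\lambda}I=\mathrm{diag}(\alpha_1+\underline{\lambda},\dots,\alpha_n+\underline{\lambda})$, the bracketing $\underline{\lambda}\in(-\alpha_2,-\alpha_1)$ makes $A$ have exactly one negative eigenvalue and $n-1$ positive ones. Everything then reduces to deciding when $A$ restricted to the tangent space $\underline{x}^{\perp}$ is positive definite, and to eliminating the borderline case $\varphi'(\underline{\lambda})=0$.

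The analysis rests on two ingredients. The first is the exact identity, valid with no higher-order remainder,
\begin{equation}
f(x)-f(\underline{x})=\tfrac12\,(x-\underline{x})^{T}A\,(x-\underline{x})\qquad\text{for all }x\text{ with }\|x\|=1,\nonumber
\end{equation}
which I would obtain by using $A\underline{x}=-c$ from \eqref{kkt0} and substituting $\|x\|^2=1$. The second links definiteness of $A|_{\underline{x}^{\perp}}$ to the sign of $\varphi'$: using $\underline{x}_i=-c_i/(\alpha_i+\underline{\lambda})$,
\begin{equation}
\underline{x}^{T}A^{-1}\underline{x}=\sum_{i=1}^{n}\frac{c_i^2}{(\alpha_i+\underline{\lambda})^3}=-\tfrac12\,\varphi'(\underline{\lambda}).\nonumber
\end{equation}
Because $A$ has a single negative eigenvalue, a Cauchy-interlacing (or bordered-determinant) argument shows the smallest eigenvalue of $A|_{\underline{x}^{\perp}}$ is the unique root in $(\alpha_1+\underline{\lambda},\alpha_2+\underline{\lambda})$ of the strictly increasing map $\mu\mapsto\underline{x}^{T}(A-\mu I)^{-1}\underline{x}$; evaluating at $\mu=0$ gives
\begin{equation}
A|_{\underline{x}^{\perp}}\succ0\ \Longleftrightarrow\ \underline{x}^{T}A^{-1}\underline{x}<0\ \Longleftrightarrow\ \varphi'(\underline{\lambda})>0,\nonumber
\end{equation}
while $\varphi'(\underline{\lambda})=0$ corresponds exactly to a simple zero eigenvalue of $A|_{\underline{x}^{\perp}}$.

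For the ``if'' direction the two ingredients combine painlessly: $A|_{\underline{x}^{\perp}}\succ0$ together with the identity makes the tangential part dominate (the radial slack being $O(\|x-\underline{x}\|^2)$), so $\underline{x}$ is a strict local minimizer on the sphere, while $\underline{\lambda}\ge0$ controls the inward directions; it is nonglobal because $\underline{\lambda}<-\alpha_1$ violates the threshold of Lemma~\ref{lem:g}. The hard part is the borderline case in the ``only if'' direction, i.e.\ upgrading $\varphi'(\underline{\lambda})\ge0$ to $\varphi'(\underline{\lambda})>0$. Suppose $\varphi'(\underline{\lambda})=0$. Then there is a unit $d^{*}\perp\underline{x}$ with $Ad^{*}=\nu\underline{x}$, and crucially $\nu\neq0$ since $A$ is nonsingular (each $\alpha_i+\underline{\lambda}\neq0$). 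Feeding the feasible curve $x(t)=(1+\sigma(t))\underline{x}+t\,d^{*}$, with $\sigma(t)=-\tfrac12 t^2+O(t^4)$ chosen to keep $\|x(t)\|=1$, into the exact identity and noting that the quadratic term $t^2 d^{*T}Ad^{*}$ vanishes, yields
\begin{equation}
f(x(t))-f(\underline{x})=\tfrac12\bigl(-\nu\,t^3+O(t^4)\bigr).\nonumber
\end{equation}
Choosing the sign of $t$ opposite to that of $\nu$ produces feasible points arbitrarily close to $\underline{x}$ with strictly smaller objective value, contradicting local minimality. Hence $\varphi'(\underline{\lambda})>0$, and both implications are established. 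The main obstacle is precisely this degenerate step: the second-order test is inconclusive, and the contradiction must be extracted from the third-order term, whose non-vanishing hinges on $\nu\neq0$, itself a consequence of $A$ being nonsingular on the whole space.
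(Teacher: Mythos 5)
This lemma is not proved in the paper at all --- it is quoted from \cite{WX20} (Theorem 3.1) --- so your proposal can only be compared with the argument in that reference. Your strategy does match it in substance: reduce to $\alpha_1<\alpha_2$, $c_1\neq 0$; use the exact Lagrangian identity $f(x)-f(\underline{x})=\tfrac12(x-\underline{x})^TA(x-\underline{x})$ on the unit sphere with $A=Q+\underline{\lambda}I$; convert $\varphi'(\underline{\lambda})$ into $\underline{x}^TA^{-1}\underline{x}=-\tfrac12\varphi'(\underline{\lambda})$ and hence into the inertia of $A$ restricted to $\underline{x}^{\perp}$; and, for the genuinely new content of the theorem (upgrading Mart\'inez's $\varphi'(\underline{\lambda})\ge0$ to a strict inequality), exhibit a feasible curve on the sphere through the degenerate tangential direction whose third-order term does not vanish. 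That third-order computation is correct: taking $d^*=A^{-1}\underline{x}/\|A^{-1}\underline{x}\|$ gives $\nu=1/\|A^{-1}\underline{x}\|>0$ automatically, and the only slip is that you should choose $\mathrm{sign}(t)=\mathrm{sign}(\nu)$ (not the opposite) to make $-\nu t^3<0$; this is harmless.

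The genuine gap is in the ``if'' direction, at exactly the clause you wave through with ``$\underline{\lambda}\ge 0$ controls the inward directions.'' It does not when $\underline{\lambda}=0$, and the statement as transcribed here (a \emph{nonnegative} $\underline{\lambda}\in(-\alpha_2,-\alpha_1)$) admits that case. Concretely, take $Q=\mathrm{diag}(-1,1)$ and $c=(4/5,\,3/5)^T$: then $\underline{x}=(4/5,\,-3/5)^T$ with $\underline{\lambda}=0$ satisfies \eqref{kkt0}, $\|\underline{x}\|=1$, $\varphi(0)=c_1^2+c_2^2-1=0$, $\varphi'(0)=2(c_1^2-c_2^2)>0$, and $0\in(-\alpha_2,-\alpha_1)$; yet $\underline{x}$ is an unconstrained saddle of $f$, and $f(\underline{x}+te_1)-f(\underline{x})=-t^2/2<0$ with $\underline{x}+te_1$ feasible for small $t<0$, so $\underline{x}$ is not a local minimizer. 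The hypothesis actually used in \cite{WX20} is $\underline{\lambda}\in(\max\{0,-\alpha_2\},-\alpha_1)$, i.e.\ $\underline{\lambda}>0$, consistent with the strict complementarity of Lemma \ref{lem:l2}. With $\underline{\lambda}>0$ the ``if'' direction does go through, but it still needs more than your one line: for feasible $x=\underline{x}+d$ one has $\underline{x}^Td\le-\tfrac12\|d\|^2$, and the proof splits into the regime $|\underline{x}^Td|\gtrsim\|d\|$, where the first-order slack $-2\underline{\lambda}\,\underline{x}^Td$ dominates all quadratic terms (this is where $\underline{\lambda}>0$ is indispensable), and the regime $|\underline{x}^Td|=O(\|d\|^2)$, where $d$ is essentially tangential and $2(f(x)-f(\underline{x}))\ge d^TAd\ge\tfrac{\mu_1}{2}\|d\|^2$ with $\mu_1>0$ the smallest eigenvalue of $A|_{\underline{x}^{\perp}}$. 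Correct the hypothesis to $\underline{\lambda}>0$ and write out that case split, and your proof is complete.
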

In 1998, Lucidi et al. \cite{LPR98} proved that the strict complementarity condition holds at the local nonglobal minimizer of (TRS).
\begin{lem}[\cite{LPR98}, Proposition 3.5] \label{lem:l2}
If $\underline{x}$ is a local nonglobal minimizer of (TRS), the corresponding Lagrangian multiplier is $\underline{\lambda}$, then (\ref{kkt1}) holds with
\[
\underline{x}^T\underline{x}-1=0,~~\underline{\lambda}>0.
\]
\end{lem}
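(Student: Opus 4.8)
The plan is to deduce the strict complementarity $\underline{\lambda}>0$ together with activity of the constraint purely from the first- and second-order necessary optimality conditions, exploiting that $Q$ is \emph{not} positive semidefinite. I would argue by contradiction, supposing that strict complementarity fails. By the complementary slackness condition \eqref{kkt1}, failure of strict complementarity forces $\underline{\lambda}=0$ in either scenario (an inactive constraint already yields $\underline{\lambda}=0$), and then the stationarity condition \eqref{kkt0} collapses to $Q\underline{x}+c=0$, i.e.\ $\nabla f(\underline{x})=0$. Everything then hinges on showing that $\nabla f(\underline{x})=0$ at a local minimizer is incompatible with $Q$ being indefinite.

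First I would dispose of the case in which the constraint is inactive, $\underline{x}^T\underline{x}<1$. Then $\underline{x}$ lies in the interior of the feasible ball and is therefore an unconstrained local minimizer of $f$ on a neighborhood; the classical second-order necessary condition gives $\nabla^2 f(\underline{x})=Q\succeq 0$, contradicting the standing assumption that $Q$ is not positive semidefinite. Hence the constraint is active and $\underline{x}^T\underline{x}-1=0$, which is the first assertion.

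The main work, and the step I expect to be the real obstacle, is ruling out an active constraint with $\underline{\lambda}=0$, because here the curvature of the constraint enters and the naive tangent-space argument is inconclusive. My plan is to invoke the second-order necessary condition in its correct Lagrangian form: since $\underline{x}^T\underline{x}=1$ makes $\nabla(x^Tx-1)=2\underline{x}\neq 0$, LICQ holds, and at the local minimizer $d^T(Q+\underline{\lambda}I)d\ge 0$ must hold for every $d$ in the critical cone. With $\underline{\lambda}=0$ we have $\nabla f(\underline{x})=0$, so the linearized-objective condition defining the critical cone is vacuous and the cone reduces to the entire half-space $\{d:\underline{x}^Td\le 0\}$; consequently $d^TQd\ge 0$ for all $d$ with $\underline{x}^Td\le 0$.

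The decisive observation is a symmetry: for an arbitrary $v\in\mathbb{R}^n$, either $v$ or $-v$ satisfies $\underline{x}^T(\cdot)\le 0$, while $v^TQv=(-v)^TQ(-v)$, so the sign restriction defining the half-space is no restriction at all and $v^TQv\ge 0$ for every $v$. This forces $Q\succeq 0$, again contradicting the assumption on $Q$. Therefore $\underline{\lambda}=0$ is impossible, $\underline{\lambda}>0$, and \eqref{kkt1} holds in the strict-complementarity form $\underline{x}^T\underline{x}-1=0$, $\underline{\lambda}>0$. The only delicate point throughout is to use the \emph{Lagrangian} Hessian $Q+\underline{\lambda}I$ on the critical cone rather than $Q$ on the tangent space, so that the constraint curvature is correctly accounted for; once that is in place, the half-space symmetry trick finishes the argument cleanly.
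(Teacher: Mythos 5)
The paper does not prove this lemma; it is quoted verbatim from Lucidi--Palagi--Roma \cite{LPR98} (their Proposition 3.5), so there is no in-paper argument to compare against. Your proof is correct and self-contained. Both halves are sound: an inactive constraint would make $\underline{x}$ an unconstrained local minimizer of a quadratic with indefinite Hessian, which is impossible; and for the active case with $\underline{\lambda}=0$, LICQ holds (since $\nabla(x^Tx-1)=2\underline{x}\neq 0$ on the sphere), the critical cone degenerates to the half-space $\{d:\underline{x}^Td\le 0\}$ because $\nabla f(\underline{x})=0$, and the second-order necessary condition on that cone plus the evenness of $d\mapsto d^TQd$ forces $Q\succeq 0$, a contradiction. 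You are right that the delicate point is using the critical-cone version of the second-order necessary condition rather than the tangent-space version: the latter would only give $d^TQd\ge 0$ on the hyperplane $\underline{x}^\perp$, which does not exclude a single negative eigenvalue of $Q$. If you want to avoid invoking that theorem, an even more elementary route is available here because $f$ is exactly quadratic: with $\underline{\lambda}=0$ one has $f(\underline{x}+td)-f(\underline{x})=\tfrac{t^2}{2}d^TQd$, and for every $d$ with $\underline{x}^Td<0$ the point $\underline{x}+td$ is feasible for small $t>0$, so $d^TQd\ge 0$ on an open dense set of directions and hence everywhere by continuity. Note also that your argument proves the conclusion for every local minimizer, not just nonglobal ones, which is consistent with Lemma \ref{lem:g}.
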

In 2017, Adachi et al. \cite{A17} observed the following result.
\begin{lem}[\cite{A17}, Lemma 3.1] \label{lem:l3}
If $(x;\lambda)$ is a KKT point of (TRS) satisfying
\eqref{kkt0} and \eqref{kkt3}, then we have
\[
{\rm det}~M_1(\lambda)=0,
\]
where
\begin{eqnarray}
&&M_1(\lambda):=\left[ {\begin{array}{*{20}{c}}
-I& Q\\
Q&-cc^T
\end{array}} \right]-\lambda
\left[ {\begin{array}{*{20}{c}}
0& -I \\
-I &0
\end{array}} \right].\nonumber
\end{eqnarray}
That is, $\lambda$ is a real generalized eigenvalue of $M_1(\lambda)$.
\end{lem}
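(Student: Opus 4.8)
The plan is to compute $\det M_1(\lambda)$ by taking a Schur complement with respect to the always-invertible top-left block, and then to exploit the stationarity relation \eqref{kkt0} to factor the resulting Schur complement into a product that contains a manifestly singular factor. Throughout I write $A:=Q+\lambda I$, which is symmetric because $Q$ is, so that, after carrying out the indicated subtraction,
\[
M_1(\lambda)=\left[\begin{array}{cc} -I & A \\ A & -cc^T\end{array}\right].
\]

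First I would take the Schur complement of the $(1,1)$-block $-I$, which is invertible for every $\lambda$, giving
\[
\det M_1(\lambda)=\det(-I)\cdot\det\!\big(A^2-cc^T\big)=(-1)^n\det\!\big(A^2-cc^T\big).
\]
Thus the claim reduces to showing that $A^2-cc^T$ is singular. This reformulation is convenient because it removes the block structure and isolates the dependence on the data in a single $n\times n$ matrix.

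The key step is to rewrite $cc^T$ using \eqref{kkt0}, which reads $Ax=-c$, i.e. $c=-Ax$. Hence $cc^T=(Ax)(Ax)^T=A\,xx^T\,A$ (using $A^T=A$), and therefore
\[
A^2-cc^T=A\big(I-xx^T\big)A.
\]
Taking determinants and invoking the matrix determinant lemma $\det(I-xx^T)=1-x^Tx$ together with the feasibility/complementarity equation \eqref{kkt3}, namely $x^Tx=1$, yields
\[
\det\!\big(A^2-cc^T\big)=\det(A)^2\,(1-x^Tx)=0,
\]
so that $\det M_1(\lambda)=0$. I would emphasize that this argument needs no assumption on the invertibility of $A=Q+\lambda I$: the vanishing comes entirely from the rank-one perturbation $I-xx^T$, whose determinant is $1-\|x\|^2$.

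There is no serious analytic obstacle; the whole difficulty lies in the right algebraic observation, namely that the KKT condition lets one absorb the rank-one term $cc^T$ into the symmetric sandwich $A(\cdot)A$, turning $A^2-cc^T$ into $A(I-xx^T)A$. Once that factorization is in hand, singularity on the constraint boundary $x^Tx=1$ is immediate. An equivalent, more constructive route would be to exhibit a null vector of $M_1(\lambda)$ directly: the block system $-u+Av=0$, $Au-cc^Tv=0$ reduces to $A(I-xx^T)Av=0$, and when $A$ is invertible one may take $v=A^{-1}x$, $u=x$, recovering an explicit kernel vector $(x;A^{-1}x)\neq 0$. I would nevertheless prefer the determinant factorization above, since it avoids case distinctions on the rank of $A$.
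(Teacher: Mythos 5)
Your proposal is correct and follows essentially the same route as the paper: both use the stationarity relation $c=-(Q+\lambda I)x$ to rewrite $cc^T=(Q+\lambda I)xx^T(Q+\lambda I)$ and arrive at the identity $\det M_1(\lambda)=(-1)^n(\det(Q+\lambda I))^2(1-x^Tx)$, which vanishes by \eqref{kkt3}. The only cosmetic difference is that you take the Schur complement of the $-I$ block before the rank-one substitution, whereas the paper presents it as a congruence $\mathrm{diag}(I,\,Q+\lambda I)$ around the middle matrix $\bigl[\begin{smallmatrix}-I & I\\ I & -xx^T\end{smallmatrix}\bigr]$.
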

Let $(x;\lambda)$ satisfying \eqref{kkt0}. We can rewrite $M_1(\lambda)$ as
\[
M_1(\lambda)=\left[ {\begin{array}{*{20}{c}}
I& {} \\
{}&Q+\lambda I
\end{array}} \right]
\left[ {\begin{array}{*{20}{c}}
-I& I \\
I&-xx^T
\end{array}} \right]
\left[ {\begin{array}{*{20}{c}}
I& {} \\
{}&Q+\lambda I
\end{array}} \right],
\]
Then,
\begin{eqnarray}
{\rm det}~M_1(\lambda)=(-1)^n ({\rm det}(Q+\lambda I))^2(1-x^Tx).\label{M11}
\end{eqnarray}
The following result follows from Lemma \ref{lem:l3} and \eqref{M11}.
\begin{cor}[\cite{S17}, Corollary 3]\label{cor:eig1}
The set of real generalized eigenvalues of $M_1(\lambda)$ is nonempty. Moreover, if ${\rm det}~M_1(\lambda)=0$, then either $-\lambda$ is an eigenvalue of $Q$ or $\lambda$ is a zero point of $\varphi(\lambda)$.
\end{cor}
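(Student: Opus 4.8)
My plan is to read the two assertions of the corollary separately and to drive both from the factorization identity (\ref{M11}) and the lemmas already recalled. For the nonemptiness of the real generalized eigenvalue set, I would argue by exhibiting a concrete eigenvalue. Since $f$ is continuous on the compact ball $\{x:x^Tx\le 1\}$, (TRS) attains a global minimizer $x^*$; because $Q$ is not positive semidefinite this minimizer lies on the boundary, so Lemma~\ref{lem:g} furnishes a multiplier $\lambda^*\ge -\alpha_1>0$ for which $(x^*;\lambda^*)$ satisfies (\ref{kkt0}) and ${x^*}^Tx^*=1$, i.e.\ (\ref{kkt3}). Lemma~\ref{lem:l3} then gives $\det M_1(\lambda^*)=0$, so $\lambda^*$ is a real generalized eigenvalue of the pencil $M_1(\lambda)$ and the set is nonempty.

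For the dichotomy, suppose $\det M_1(\lambda)=0$ for some real $\lambda$. If $-\lambda$ is an eigenvalue of $Q$ there is nothing to prove, so assume $-\lambda\notin\{\alpha_1,\dots,\alpha_n\}$; then $Q+\lambda I$ is invertible and (\ref{kkt0}) uniquely determines $x=-(Q+\lambda I)^{-1}c$. Feeding this $(x;\lambda)$ into (\ref{M11}) and cancelling the nonzero factor $(\det(Q+\lambda I))^2$ forces $1-x^Tx=0$. Since $Q$ is diagonal, $x^Tx=\sum_{i=1}^n c_i^2/(\alpha_i+\lambda)^2$, so the condition $1-x^Tx=0$ is exactly $\varphi(\lambda)=0$, and $\lambda$ is a zero of the secular function, as required.

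The computational pieces—checking the block factorization that precedes (\ref{M11}) and expressing $x^Tx$ through the diagonal entries of $Q$—are routine. The one real subtlety, and the reason the dichotomy needs the escape clause ``$-\lambda$ is an eigenvalue of $Q$,'' is that (\ref{M11}) is only meaningful at a point $(x;\lambda)$ that solves (\ref{kkt0}); when $Q+\lambda I$ is singular no such $x$ is guaranteed, so the secular-function branch cannot be used there. The main conceptual choice is to prove nonemptiness by producing the global minimizer as an explicit generalized eigenvalue through Lemma~\ref{lem:l3}, rather than by directly chasing zeros of $\varphi$, which would need a separate treatment of the degenerate case $c_1=0$.
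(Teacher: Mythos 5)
Your proposal is correct and follows essentially the same route as the paper, which derives the corollary from Lemma \ref{lem:l3} together with the factorization identity \eqref{M11}: nonemptiness comes from the global minimizer of (TRS), which lies on the sphere since $Q$ is not positive semidefinite and hence yields a KKT point satisfying \eqref{kkt0} and \eqref{kkt3}, while the dichotomy comes from cancelling the nonzero factor $(\det(Q+\lambda I))^2$ in \eqref{M11} when $Q+\lambda I$ is invertible and identifying $1-x^Tx$ with $-\varphi(\lambda)$. You have simply filled in the details the paper leaves implicit; no gaps.
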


\subsection{Characterizations for local minimizers of ($p$-RS)}\label{subsec2.2}
In this subsection, we review some characterizations for local minimizers of ($p$-RS):
\begin{eqnarray}
(p\rm-RS)~~&\mathop{\min}\limits_{x\in \mathbb{R}^{n}} & \left\{g(x)=\frac{1}{2}x^TQx+c^Tx+\frac{\sigma}{p}\|x\|^p\right\}, \nonumber
\end{eqnarray}
where $\sigma>0$, $p>2$, $\|\cdot\|$ is the Euclidean norm, and $Q,c$ have the same definitions as in (TRS). The global minimum of ($p$-RS) is attainable, even if $Q$ is not positive semidefinite. Actually, $g(x)$ is coercive, i.e., 
\[
\lim_{\|x\|\rightarrow +\infty}g(x)=+\infty,
\]
since $p>2$ and then  
\[
\lim_{\|x\|\rightarrow +\infty}\frac{\frac{1}{2}x^TQx+c^Tx}{ \|x\|^p}=0. 
\]
We call
$x\in \mathbb{R}^{n}$ a critical point of ($p$-RS) if and only if it satisfies the following first order necessary condition for local minimizers of ($p$-RS):
\begin{eqnarray}
&&\nabla g(x)=(Q+\sigma\|x\|^{p-2}I)x+c=0, \label{nc1}
\end{eqnarray}
where $\nabla g$ denotes the gradient of $g(\cdot)$. Let
\begin{eqnarray}
&&t=\|x\|^{p-2}, \label{t1}
\end{eqnarray}
then (\ref{nc1}) can be rewritten as
\begin{eqnarray}
&&(Q+\sigma tI)x+c=0. \label{nc2}
\end{eqnarray}
Suppose $-\sigma t\not\in\{\alpha_1,\alpha_2,\cdots,\alpha_n\}$. Then by substituting the $x$-solution of (\ref{nc2}) into (\ref{t1}), we obtain that $t$ must be a zero point of the following secular function:
\begin{equation}
h(t)=\sum_{i=1}^n\frac{c_i^2}{(\sigma t+\alpha_i)^2}-t^{\frac{2}{p-2}}.\label{h0}
\end{equation}
In the following, we make an assumption that $x\neq 0$, then it holds that $c\neq0$ from (\ref{nc1}). Under this assumption, $h(t)$ has the same zeros as
\begin{equation}
p(t)={\rm log}\left(\sum_{i=1}^n\frac{c_i^2}{(\sigma t+\alpha_i)^2}\right)-\frac{2}{p-2}{\rm log}(t).\nonumber
\end{equation}
Hsia et al. \cite{Hsia17} proved that $p''(t)>0$ on each nonempty interval $\left(-\frac{\alpha_{i+1}}{\sigma},-\frac{\alpha_{i}}{\sigma}\right)$ for $i\in\{1,2,\cdots,n-1\}$.

The first derivatives of $h(t)$ and $p(t)$ can be written as follows:
\begin{eqnarray}
&&h'(t)=-\sum_{i=1}^n\frac{2\sigma c_i^2}{(\sigma t+\alpha_i)^3}-\frac{2}{(p-2)t}\cdot t^{\frac{2}{p-2}},\label{h1}\\
&&p'(t)=-\frac{\sum_{i=1}^n\frac{2\sigma c_i^2}{(\sigma t+\alpha_i)^3}}{\sum_{i=1}^n\frac{c_i^2}{(\sigma t+\alpha_i)^2}}-\frac{2}{(p-2)t}.\label{pt}
\end{eqnarray}
If $\hat{t}$ is a zero point of $h(t)$, according to (\ref{h0}), (\ref{h1}) and (\ref{pt}), we obtain
\begin{eqnarray}
&&p'(\hat{t})=h'(\hat{t})\cdot \hat{t}^{{\frac{-2}{p-2}}}.\label{pt1}
\end{eqnarray}
The following detailed characterizations of local minimizers of ($p$-RS) are all due to Hsia et al.
\cite{Hsia17}.
\begin{lem}[\cite{Hsia17}, Theorem 2.2]\label{lem:p1}
$x^*$ is a global minimizer of ($p$-RS) for $p > 2$ if and only if it is a critical point satisfying (\ref{nc1}) and
\[
\sigma\|x^*\|^{p-2}+\alpha_1\ge0.
\]
Moreover, the $\ell_2$ norms of all the global minimizers are equal.
\end{lem}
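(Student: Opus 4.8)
The plan is to mirror the trust-region characterization of Lemma~\ref{lem:g}: the scalar $\sigma\|x^*\|^{p-2}$ plays the role of the multiplier $\lambda^*$, and the stated condition $\sigma\|x^*\|^{p-2}+\alpha_1\ge0$ is exactly $Q+\sigma\|x^*\|^{p-2}I\succeq0$, since $Q$ is diagonal with smallest entry $\alpha_1$. The engine of the whole proof is one exact identity. Writing $t^*=\|x^*\|^{p-2}$, $M^*=Q+\sigma t^*I$ and $\Phi(s)=\frac{\sigma}{p}s^p-\frac{\sigma t^*}{2}s^2$, I would first check that for every critical point $x^*$ (satisfying \eqref{nc1}) and every $x\in\mathbb{R}^n$,
\[
g(x)-g(x^*)=\tfrac12(x-x^*)^TM^*(x-x^*)+\Phi(\|x\|)-\Phi(\|x^*\|).
\]
This follows by substituting $c=-M^*x^*$ (from \eqref{nc1}) into $g(x)-g(x^*)$ and completing the square in the quadratic part. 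A one-variable computation, $\Phi'(s)=\sigma s\,(s^{p-2}-t^*)$, shows that $s^*:=\|x^*\|$ is the \emph{strict, unique} global minimizer of $\Phi$ on $[0,\infty)$.

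Sufficiency is then immediate: if in addition $\sigma t^*+\alpha_1\ge0$ then $M^*\succeq0$, so both terms on the right are nonnegative and $g(x)\ge g(x^*)$ for all $x$. For necessity, a global minimizer is a critical point, so \eqref{nc1} holds, and it remains to prove $\sigma t^*+\alpha_1\ge0$. Arguing by contradiction, suppose $\sigma t^*+\alpha_1<0$, so the index set $J=\{j:\alpha_j+\sigma t^*<0\}$ is nonempty. I plan to build an explicit descent point along a coordinate $e_j$, $j\in J$, via the identity, in two cases. If some $j\in J$ has $x_j^*\neq0$, take $\tau=-2x_j^*$, which preserves the norm ($\|x^*+\tau e_j\|=\|x^*\|$); the $\Phi$-term vanishes and the identity gives $g(x^*+\tau e_j)-g(x^*)=2(\alpha_j+\sigma t^*)(x_j^*)^2<0$, contradicting global minimality. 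If instead $x_j^*=0$ for all $j\in J$, then for small $\tau$ one has $\|x^*+\tau e_j\|-\|x^*\|=O(\tau^2)$, so $\Phi(\|x^*+\tau e_j\|)-\Phi(\|x^*\|)=O(\tau^4)$ is dominated by the strictly negative quadratic term $\frac12(\alpha_j+\sigma t^*)\tau^2$, again yielding a smaller value (here one also uses $x^*\neq0$, which holds for any global minimizer because $Q\not\succeq0$ rules out the origin). The main obstacle is precisely this step: the second-order necessary condition alone only forces $M^*$ positive semidefinite on $(x^*)^\perp$, since the Hessian equals $M^*$ plus the rank-one positive semidefinite term $\sigma(p-2)\|x^*\|^{p-4}x^*{x^*}^T$, so true globality must be exploited, and the norm-preserving reflection $\tau=-2x_j^*$ is the device that turns globality into a contradiction.

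The equal-norm statement then follows from the same identity with essentially no extra work. If $x^*$ and $y^*$ are both global minimizers, apply the identity (legitimate since $x^*$ is critical) at $x=y^*$: the left side is $g(y^*)-g(x^*)=0$, while on the right $\frac12(y^*-x^*)^TM^*(y^*-x^*)\ge0$ and $\Phi(\|y^*\|)-\Phi(\|x^*\|)\ge0$. Hence both terms vanish, and because $s^*=\|x^*\|$ is the strict unique minimizer of $\Phi$ on $[0,\infty)$, the equality $\Phi(\|y^*\|)=\Phi(\|x^*\|)$ forces $\|y^*\|=\|x^*\|$.
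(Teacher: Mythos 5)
Your proposal is correct, but note that the paper itself gives no proof of this statement: Lemma \ref{lem:p1} is imported verbatim from Hsia, Sheu and Yuan \cite{Hsia17} (their Theorem 2.2), so there is no in-paper argument to compare against. Your central identity
\[
g(x)-g(x^*)=\tfrac12(x-x^*)^T(Q+\sigma t^*I)(x-x^*)+\Phi(\|x\|)-\Phi(\|x^*\|),\qquad \Phi(s)=\tfrac{\sigma}{p}s^p-\tfrac{\sigma t^*}{2}s^2,
\]
checks out exactly upon substituting $c=-(Q+\sigma t^*I)x^*$, and it cleanly delivers all three claims: sufficiency (both terms nonnegative when $\sigma t^*+\alpha_1\ge0$), necessity (the sign-flip perturbation $\tau=-2x_j^*$ along a coordinate with $\alpha_j+\sigma t^*<0$, or the $O(\tau^4)$-versus-$\tau^2$ comparison when $x_j^*=0$, each contradicts global minimality; your justification that $x^*\neq0$ via $Q\not\succeq0$ closes the only gap in the second case), and the equal-norm statement (both nonnegative terms must vanish, and $\|x^*\|$ is the strict unique minimizer of $\Phi$ on $[0,\infty)$). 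This is a genuinely self-contained route: the usual derivation in \cite{Hsia17} proceeds by relating a global minimizer of ($p$-RS) to a global minimizer of the quadratic over the sphere $\|x\|=\|x^*\|$ and then invoking the (TRS)-type optimality conditions of Lemma \ref{lem:g}, whereas your completion-of-squares argument avoids any appeal to the trust-region theory and makes the one-dimensional function $\Phi$ carry the entire radial analysis. What your approach buys is transparency and independence from the (TRS) machinery; what the reduction-to-sphere approach buys is reuse of an already-established characterization. Either way, your proof is valid as written.
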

\begin{lem}[\cite{Hsia17}, Lemma 3.1]\label{lem:p2}
Suppose $\underline{x}$ is a local non-global minimizer of ($p$-RS) for $p>2$. It holds that $\underline{x}_1\neq 0$, $\alpha_1<\alpha_2$, and
\[
\sigma\|\underline{x}\|^{p-2}+\alpha_2>0.
\]
\end{lem}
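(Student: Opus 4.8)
The plan is to combine the global-minimizer characterization of Lemma \ref{lem:p1} with the first- and second-order necessary conditions at $\underline{x}$. Since $Q$ is not positive semidefinite, $x=0$ is never a local minimizer (moving along an eigenvector of $Q$ for $\alpha_1<0$ strictly decreases $g$, because the quadratic part dominates $\frac{\sigma}{p}\|x\|^p$ for $p>2$), so $\underline{x}\neq 0$ and $t:=\|\underline{x}\|^{p-2}>0$. Writing the Hessian of $g$ as
\[
\nabla^2 g(\underline{x})=N+\beta\,\underline{x}\,\underline{x}^{T},\quad N:={\rm diag}(\alpha_1+\sigma t,\ldots,\alpha_n+\sigma t),\quad \beta:=\sigma(p-2)\|\underline{x}\|^{p-4}>0,
\]
the second-order necessary condition for a local minimizer reads $\nabla^2 g(\underline{x})\succeq 0$. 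I will abbreviate $\mu_i:=\alpha_i+\sigma t$.

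First I would record that $\underline{x}$, being a local non-global minimizer, is a critical point that is not a global minimizer; hence Lemma \ref{lem:p1} forces $\sigma\|\underline{x}\|^{p-2}+\alpha_1<0$, i.e. $\mu_1<0$. From here $\underline{x}_1\neq 0$ is immediate: if $\underline{x}_1=0$ then $e_1^{T}\nabla^2 g(\underline{x})\,e_1=\mu_1+\beta\underline{x}_1^2=\mu_1<0$, contradicting positive semidefiniteness. Next I would use that $\nabla^2 g(\underline{x})$ is a rank-one positive semidefinite update of $N$, so by the interlacing theorem its smallest eigenvalue is at most the second smallest eigenvalue $\mu_2$ of $N$; were $\mu_2<0$, the matrix $N$ would have two negative eigenvalues and no rank-one nonnegative perturbation could restore positive semidefiniteness, contradicting $\nabla^2 g(\underline{x})\succeq 0$. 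Thus $\mu_2=\sigma t+\alpha_2\ge 0$, and combined with $\mu_1<0$ this yields $\alpha_1<\alpha_2$.

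It remains to upgrade $\mu_2\ge 0$ to the strict inequality $\sigma\|\underline{x}\|^{p-2}+\alpha_2>0$; this is the main obstacle, since the second-order condition alone only gives ``$\ge$''. Assume for contradiction that $\mu_2=0$. For every index $i$ with $\alpha_i=\alpha_2$ (so $\mu_i=0$), the $2\times 2$ principal minor of $\nabla^2 g(\underline{x})$ indexed by $\{1,i\}$ equals $\mu_1\beta\,\underline{x}_i^2$, which is $\le 0$ and is strictly negative unless $\underline{x}_i=0$; positive semidefiniteness therefore forces $\underline{x}_i=0$, and in particular $c_i=\mu_i\underline{x}_i=0$. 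Hence $\underline{x}_2=0$, so $e_2\in\ker\nabla^2 g(\underline{x})$ and the Hessian is degenerate along $e_2$, making the second-order test inconclusive there. The crux is then a fourth-order curvilinear argument: I would eliminate the $e_1$ direction by a Lyapunov--Schmidt reduction, setting $x(s)=\underline{x}+s\,e_2+a(s)\,e_1$ and choosing $a(s)$ to minimize $g$ for each small $s$. Using $\mu_2=0$, $\mu_1<0$, $c_2=0$ and $\|x(s)\|^2=\|\underline{x}\|^2+s^2+2\underline{x}_1 a(s)+a(s)^2$, a direct expansion shows that the optimizer satisfies $a(s)=O(s^2)$ and that the reduced value $g(x(s))-g(\underline{x})$ then behaves like a strictly negative multiple of $s^4$. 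This exhibits a descent curve through $\underline{x}$, contradicting local minimality; hence $\mu_2>0$.

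The delicate point throughout is this last step: the negative curvature stored in the $e_1$ direction (where $\mu_1<0$) is only masked at second order by the rank-one regularization term $\beta\,\underline{x}\,\underline{x}^{T}$, and it reappears at fourth order once $e_1$ is optimized out, so the strict inequality genuinely requires the quartic expansion rather than the Hessian. I expect the bookkeeping in that expansion -- tracking the $s^4$ coefficient and verifying its sign from $\mu_1<0$ and $\beta>0$ -- to be the only substantive computation, the remaining steps being direct consequences of Lemma \ref{lem:p1} and eigenvalue interlacing.
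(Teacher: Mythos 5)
The paper does not prove this lemma; it is imported verbatim from Hsia, Sheu and Yuan \cite{Hsia17}, so there is no in-paper proof to compare against. Your argument is, as far as I can check, a correct self-contained proof. The easy parts are fine: $\mu_1:=\sigma\|\underline{x}\|^{p-2}+\alpha_1<0$ from Lemma \ref{lem:p1}, $\underline{x}_1\neq0$ from the diagonal entry of the Hessian, and $\mu_2\ge 0$ (hence $\alpha_1<\alpha_2$) from rank-one interlacing. For the crux, I verified your quartic expansion: with $\mu_2=0$, $\underline{x}_2=c_2=0$ and $a=\kappa s^2$ one gets
\[
g(\underline{x}+s e_2+\kappa s^2 e_1)-g(\underline{x})=\Bigl(\tfrac{1}{2}\bigl(\mu_1+\beta\underline{x}_1^2\bigr)\kappa^2+\tfrac{\beta\underline{x}_1}{2}\kappa+\tfrac{\beta}{8}\Bigr)s^4+O(s^6),
\]
and the bracket attains the value $\frac{\beta}{8}\cdot\frac{\mu_1}{\mu_1+\beta\underline{x}_1^2}<0$ when $\mu_1+\beta\underline{x}_1^2>0$, while it is a nonconstant affine function of $\kappa$ (hence takes negative values) when $\mu_1+\beta\underline{x}_1^2=0$. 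So a descent curve exists and local minimality is contradicted. One caveat: your framing of this step as ``choose $a(s)$ to minimize $g$ for each small $s$'' via Lyapunov--Schmidt is not quite licensed, because the Hessian block in the $e_1$ direction, $\mu_1+\beta\underline{x}_1^2$, is only known to be nonnegative and may vanish, in which case the implicit-function reduction degenerates; the fix is exactly what the computation above does, namely exhibit an explicit curve $a=\kappa s^2$ rather than the exact minimizer. With that adjustment the proof is complete, and it is a reasonable second-order/fourth-order route to a statement the paper itself only cites.
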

\begin{lem}[\cite{Hsia17}, Theorem 3.2]\label{lem:p3}
The point $\underline{x}$ is a local nonglobal minimizer of ($p$-RS) for $p>2$ if and only if $\underline{x}$ is a critical point satisfying (\ref{nc1}) and $\underline{t}$ {\rm(=$\|\underline{x}\|^{p-2}$)} is a root of
\[
h(t)=0 ,~t\in \left({\rm max}\left\{-\frac{\alpha_2}{\sigma},0\right\},-\frac{\alpha_1}{\sigma} \right)
\]
such that $h'(\underline{t})>0$.
\end{lem}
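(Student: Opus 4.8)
The plan is to translate the first- and second-order optimality conditions of the smooth unconstrained problem ($p$-RS) into sign information about $h'(\underline{t})$ via a rank-one update of a diagonal matrix. First I would record the Hessian $\nabla^2 g(x)=Q+\sigma\|x\|^{p-2}I+\sigma(p-2)\|x\|^{p-4}xx^T$. Writing $D:=Q+\sigma\underline{t}I=\mathrm{diag}(\sigma\underline{t}+\alpha_1,\dots,\sigma\underline{t}+\alpha_n)$ with $d_i:=\sigma\underline{t}+\alpha_i$, and $\beta:=\sigma(p-2)\underline{t}/\|\underline{x}\|^2>0$, the Hessian at a critical point $\underline{x}$ satisfying \eqref{nc2} becomes $H:=\nabla^2 g(\underline{x})=D+\beta\,\underline{x}\,\underline{x}^T$, a positive rank-one perturbation of $D$. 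The whole proof then rests on a determinant identity: by the matrix determinant lemma $\det H=\det D\,(1+\beta\,\underline{x}^TD^{-1}\underline{x})=\det D\,(1+\beta\sum_{i}\underline{x}_i^2/d_i)$, and the critical-point relation $\underline{x}_i=-c_i/d_i$ gives $c_i^2/d_i^3=\underline{x}_i^2/d_i$, so that, using $\|\underline{x}\|^2=\underline{t}^{2/(p-2)}$ at a zero of $h$, the expression \eqref{h1} simplifies to $h'(\underline{t})=-2\sigma\sum_i \underline{x}_i^2/d_i-2\|\underline{x}\|^2/((p-2)\underline{t})$. A short algebraic check shows $h'(\underline{t})>0$ is equivalent to $1+\beta\sum_i\underline{x}_i^2/d_i<0$, and $h'(\underline{t})=0$ to $1+\beta\sum_i\underline{x}_i^2/d_i=0$.

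For the ``only if'' direction I would assume $\underline{x}$ is a local nonglobal minimizer. Lemma \ref{lem:p2} supplies $\alpha_1<\alpha_2$ and $\sigma\underline{t}+\alpha_2>0$, so $\underline{t}>\max\{-\alpha_2/\sigma,0\}$, while Lemma \ref{lem:p1} forces $\sigma\underline{t}+\alpha_1<0$ (otherwise $\underline{x}$ would be global), so $\underline{t}<-\alpha_1/\sigma$; since then $-\sigma\underline{t}\notin\{\alpha_1,\dots,\alpha_n\}$, the point $\underline{t}$ is a genuine zero of $h$. On this interval $D$ has exactly one negative diagonal entry $d_1<0$ and $n-1$ positive ones, whence $\det D<0$; by Cauchy interlacing for a positive rank-one update, the eigenvalues $\mu_1\le\cdots\le\mu_n$ of $H$ obey $\mu_i\ge d_i$, so $\mu_2,\dots,\mu_n>0$. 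The second-order necessary condition $H\succeq 0$ therefore reduces to $\mu_1\ge 0$, i.e. $\det H\ge 0$, which by the identity above is exactly $h'(\underline{t})\ge 0$. Conversely, given a critical point whose $\underline{t}$ lies in the stated interval with $h'(\underline{t})>0$, the same computation yields $\det H>0$, hence $\mu_1>0$ and $H\succ 0$; the second-order sufficient condition makes $\underline{x}$ a strict local minimizer, and it is nonglobal because $\sigma\underline{t}+\alpha_1<0$ violates the global criterion of Lemma \ref{lem:p1}.

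The \emph{main obstacle} is to upgrade the necessary condition from $h'(\underline{t})\ge 0$ to the strict $h'(\underline{t})>0$, that is, to exclude the degenerate case $h'(\underline{t})=0$ in which $H$ is positive semidefinite but singular. In that case $\det H=0$ forces $1+\beta\,\underline{x}^TD^{-1}\underline{x}=0$, so $\ker H$ is spanned by $v=D^{-1}\underline{x}$ (with $v_i=\underline{x}_i/d_i$); equivalently, $\underline{t}$ is then a double, tangential zero of the strictly convex function $p(t)$, recalling the relation \eqref{pt1} between the signs of $p'$ and $h'$ at zeros of $h$. I would rule this case out by moving along a curve $x(s)=\underline{x}+sv+O(s^2)$ and expanding $g(x(s))$ past the quadratic order annihilated by $H$: the first non-vanishing higher-order term, which originates from the $\|x\|^p$ regularization, should be made strictly negative by a suitable choice of direction along $\ker H$, producing a descent path and contradicting local minimality. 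This higher-order/tangency analysis—parallel to the sharpening of Mart\'inez's $\varphi'(\underline{\lambda})\ge 0$ into Wang--Xia's $\varphi'(\underline{\lambda})>0$ for (TRS) in Lemmas \ref{lem:l1} and \ref{lem:l10}—is the genuinely delicate step, whereas the rank-one/interlacing bookkeeping is routine linear algebra.
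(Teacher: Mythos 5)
First, a point of comparison: the paper does not prove this lemma at all --- it is imported verbatim from Hsia, Sheu and Yuan \cite{Hsia17} (their Theorem 3.2), so there is no in-paper argument to measure yours against. Judged on its own terms, your framework is sound and is the natural second-order-conditions route: the identification $\nabla^2 g(\underline{x})=D+\beta\,\underline{x}\,\underline{x}^T$ with $D=Q+\sigma\underline{t}I$ and $\beta=\sigma(p-2)\|\underline{x}\|^{p-4}>0$, the determinant-lemma translation of $\det \nabla^2 g(\underline{x})$ into the sign of $1+\beta\,\underline{x}^TD^{-1}\underline{x}$, the verification that this quantity equals $-\tfrac{(p-2)\underline{t}}{2\|\underline{x}\|^2}h'(\underline{t})$ at a zero of $h$, the interlacing argument forcing $\mu_2,\dots,\mu_n>0$ on the stated interval, and the use of Lemmas \ref{lem:p1}--\ref{lem:p2} to place $\underline{t}$ in the open interval are all correct. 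They deliver the ``if'' direction in full and the necessary condition $h'(\underline{t})\ge 0$.

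The genuine gap is exactly where you flag it: upgrading $h'(\underline{t})\ge 0$ to $h'(\underline{t})>0$. As written this step is a plan, not a proof --- you assert that a higher-order term ``should be made strictly negative by a suitable choice of direction'' without computing it, and the entire content of the strict inequality lives in that computation. The good news is that it closes cleanly and no curved path is needed. In the degenerate case $\ker \nabla^2 g(\underline{x})$ is spanned by $v=D^{-1}\underline{x}$, and $\underline{x}^Tv=\underline{x}^TD^{-1}\underline{x}=-1/\beta\neq 0$. Along the straight line $\underline{x}+sv$ the quadratic term vanishes and the cubic term is $\tfrac{s^3}{6}D^3g(\underline{x})[v,v,v]$ with
\[
D^3g(\underline{x})[v,v,v]=\sigma(p-2)\|\underline{x}\|^{p-4}\,(\underline{x}^Tv)\left(3\|v\|^2+(p-4)\frac{(\underline{x}^Tv)^2}{\|\underline{x}\|^2}\right).
\]
By Cauchy--Schwarz, $(\underline{x}^Tv)^2\le\|\underline{x}\|^2\|v\|^2$, so the bracket is at least $3\|v\|^2$ when $p\ge 4$ and exceeds $3\|v\|^2-2\|v\|^2=\|v\|^2>0$ when $2<p<4$; since $\underline{x}^Tv\neq 0$, the cubic coefficient is nonzero, and choosing the sign of $s$ accordingly gives $g(\underline{x}+sv)<g(\underline{x})$ for all small $|s|$, contradicting local minimality. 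With that computation supplied, your proof is complete; without it, the hardest half of the ``only if'' direction is missing.
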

Motivated by the observation that the Lagrange multiplier of each KKT point of (TRS) satisfying \eqref{kkt0} and \eqref{kkt3} is a generalized eigenvalue of $M_1(\lambda)$ \cite{A17}, Lieder \cite{L20} pointed out the following result holds for the Nesterov-Polyak subproblem (i.e., ($p$-RS) for $p=3$).
\begin{lem}[\cite{L20}, Lemma 3.1]\label{lem:eig}
If $x$ is a critical point of the cubic regularization, then $t~(=\|x\|)$ is a real generalized eigenvalue of
\begin{eqnarray}
&&M_2(t)=\left[ {\begin{array}{*{20}{c}}
0& 0& 0& c^T \\
0&-\sigma I&0& Q \\
0&0&-\sigma& 0 \\
c&Q&0&0
\end{array}} \right]-t
\left[ {\begin{array}{*{20}{c}}
0& 0& -\sigma& 0 \\
0&0&0& -\sigma I \\
-\sigma&0&0& 0 \\
0&-\sigma I&0&0
\end{array}} \right].\nonumber
\end{eqnarray}
That is, $t$ is a real root of ${\rm det}~M_2(t)=0$.
\end{lem}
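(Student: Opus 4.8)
The plan is to mirror the determinant factorization \eqref{M11} that was used for $M_1(\lambda)$ in the (TRS) setting. For $p=3$ the substitution \eqref{t1} gives $t=\|x\|$, so the critical point condition \eqref{nc2} reads $(Q+\sigma t I)x+c=0$, i.e. $c=-(Q+\sigma t I)x$ together with $t^2=\|x\|^2=x^Tx$. Writing $A:=Q+\sigma t I$ and combining the two constant matrices in the pencil $M_2(t)=\widehat{M}-t\widehat{N}$, the substitution $c=-Ax$ turns $M_2(t)$ into a matrix whose only $Q$-dependence sits inside the blocks $A$ in positions $(2,4),(4,2)$ and inside $c=-Ax$. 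Proving $\det M_2(t)=0$ is then exactly the assertion that $t$ is a real generalized eigenvalue of the pencil $(\widehat{M},\widehat{N})$.

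First I would exhibit a block-diagonal congruence $L=\mathrm{diag}(1,I,1,A)$ and a reduced symmetric matrix
\[
K=\begin{bmatrix} 0 & 0 & \sigma t & -x^T \\ 0 & -\sigma I & 0 & I \\ \sigma t & 0 & -\sigma & 0 \\ -x & I & 0 & 0 \end{bmatrix},
\]
and check the identity $M_2(t)=LKL^T$, which uses only $c=-Ax$ and the symmetry of $A$ (the block $(1,4)$ becomes $-x^TA=c^T$, the block $(4,1)$ becomes $-Ax=c$). Since this holds for every $A$, one gets at once $\det M_2(t)=(\det L)^2\det K=\big(\det(Q+\sigma t I)\big)^2\det K$, reducing the problem to evaluating the $Q$-free determinant $\det K$.

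Next I would compute $\det K$ by a Schur complement. After a sign-neutral reordering of the scalar and vector blocks, $K$ takes the form $\begin{bmatrix} G & H \\ H^T & J\end{bmatrix}$ with the scalar block $G=\begin{bmatrix} 0 & \sigma t \\ \sigma t & -\sigma\end{bmatrix}$, the invertible $2n\times 2n$ block $J=\begin{bmatrix} -\sigma I & I \\ I & 0\end{bmatrix}$ (so $\det J=(-1)^n$ and $J^{-1}=\begin{bmatrix} 0 & I \\ I & \sigma I\end{bmatrix}$), and an $H$ carrying the single nontrivial entry $-x^T$. A short computation gives $HJ^{-1}H^T=\mathrm{diag}(\sigma\, x^Tx,\,0)$, whence $\det K=\det J\cdot\det(G-HJ^{-1}H^T)=(-1)^n\sigma^2(x^Tx-t^2)$. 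Combining the two steps yields the exact analog of \eqref{M11},
\[
\det M_2(t)=(-1)^n\sigma^2\big(\det(Q+\sigma t I)\big)^2\big(x^Tx-t^2\big),
\]
and since a critical point satisfies $t^2=\|x\|^2=x^Tx$, the last factor vanishes, giving $\det M_2(t)=0$.

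The main obstacle is simply discovering the congruence $L$ and the reduced matrix $K$; once the factorization is guessed, the remaining determinant bookkeeping is routine. As a hands-on cross-check I would alternatively exhibit a nonzero null vector of $M_2(t)$ directly: when $Q+\sigma t I$ is nonsingular one verifies that $v=(1;\,x;\,t;\,\sigma(Q+\sigma t I)^{-1}x)$ lies in $\ker M_2(t)$, the only nontrivial block identity being $c^T(Q+\sigma t I)^{-1}x=-x^Tx=-t^2$, which follows from $c=-(Q+\sigma t I)x$ and symmetry of $Q+\sigma t I$; the singular case needs no separate argument, since the factorization already forces $\det M_2(t)=0$ through the factor $\big(\det(Q+\sigma t I)\big)^2$. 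I would prefer the factorization route because it treats both cases uniformly and displays the precise parallel with Lemma \ref{lem:l3} and the identity \eqref{M11}.
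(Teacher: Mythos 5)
Your proposal is correct and follows essentially the same route as the paper: the congruence $M_2(t)=LKL^T$ with $L=\mathrm{diag}(1,I,1,Q+\sigma tI)$ and the same reduced matrix $K$, followed by a Schur-complement evaluation giving $\det M_2(t)=(-1)^{n+1}\sigma^2\bigl(\det(Q+\sigma tI)\bigr)^2(t^2-x^Tx)$, which is exactly the paper's identity \eqref{M22} (your $(-1)^n(x^Tx-t^2)$ agrees with the paper's sign convention), and this vanishes at a critical point since $t^2=x^Tx$. The explicit null vector you offer as a cross-check is a harmless extra, not needed once the factorization is in hand.
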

Let $(x;t)$ satisfying (\ref{nc2}). We can reformulate $M_2(t)$ as
\[
M_2(t)=\left[ {\begin{array}{*{20}{c}}
I& {}& {}& {} \\
{}&I&{}& {} \\
{}&{}&1& {} \\
{}&{}&{}& Q+\sigma tI
\end{array}} \right]
\left[ {\begin{array}{*{20}{c}}
0& 0& \sigma t& -x^T\\
0&-\sigma I&0& I \\
\sigma t&0&-\sigma & 0 \\
-x &I&0&0
\end{array}} \right]
\left[ {\begin{array}{*{20}{c}}
I& {}& {}& {} \\
{}&I&{}& {} \\
{}&{}&1& {} \\
{}&{}&{}& Q+\sigma tI
\end{array}} \right],
\]
Then, by Schur complement,
\begin{eqnarray}
{\rm det}~M_2(t)=(-1)^{n+1}\sigma^2\cdot({\rm det}(Q+\sigma t I))^2(t^2-x^Tx).\label{M22}
\end{eqnarray}
The following result follows from Lemma \ref{lem:eig} and \eqref{M22}.
\begin{cor}\label{cor:eig2}
The set of real generalized eigenvalues of $M_2(t)$ is nonempty. Moreover, if ${\rm det}~M_2(t)=0$, then one of the following cases happens: (1) $t=0$; (2) $-\sigma t$ is an eigenvalue of $Q$; (3) $t$ is a zero point of $p(t)$.
\end{cor}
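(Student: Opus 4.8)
The plan is to read the statement directly off the factorization \eqref{M22} together with Lemma \ref{lem:eig}, splitting the argument into a nonemptiness part and a trichotomy part, exactly parallel to how Corollary \ref{cor:eig1} is obtained from Lemma \ref{lem:l3} and \eqref{M11}.

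First, for nonemptiness I would use the coercivity of $g$ recorded above: since $\lim_{\|x\|\to+\infty}g(x)=+\infty$, the cubic regularization ($p=3$) attains a global minimizer $x^*$, which being a minimizer satisfies the first-order condition (\ref{nc1}) and is therefore a critical point. Lemma \ref{lem:eig} then makes $t^*=\|x^*\|$ a real root of $\det M_2(t)=0$, so the set of real generalized eigenvalues of $M_2(t)$ is nonempty.

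Next, for the trichotomy, suppose $\det M_2(t)=0$ for some real $t$. Invoking \eqref{M22} and using $\sigma>0$ to cancel the nonzero factor $\sigma^2$, I obtain $(\det(Q+\sigma tI))^2(t^2-x^Tx)=0$. If $\det(Q+\sigma tI)=0$, then $-\sigma t$ is an eigenvalue of $Q$ and we are in case (2). Otherwise $Q+\sigma tI$ is invertible, so (\ref{nc2}) has the unique solution $x=-(Q+\sigma tI)^{-1}c$ that validates the factorization preceding \eqref{M22}, and the remaining factor must vanish, i.e. $t^2=x^Tx$. If $t=0$ this is case (1). If $t\neq0$, substituting the diagonal form $Q=\mathrm{diag}(\alpha_1,\cdots,\alpha_n)$ gives $t^2=\sum_{i=1}^n c_i^2/(\sigma t+\alpha_i)^2$, which is precisely $h(t)=0$ for $p=3$; since $c\neq0$ forces $h$ and $p$ to share the same zeros, $t$ is a zero point of $p(t)$, i.e. case (3).

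I expect no serious obstacle here, since the heavy lifting is already carried out in \eqref{M22}; the only thing requiring care is the implicit coupling between $x$ and $t$. The factorization is valid only for pairs $(x;t)$ satisfying (\ref{nc2}), so the singular case $\det(Q+\sigma tI)=0$ must be peeled off first and handled directly as case (2), while in the nonsingular case $x$ is uniquely determined by $t$ before the identity $t^2=x^Tx$ may be turned into the secular equation. I would also note that the equivalence $h(t)=0\Leftrightarrow p(t)=0$ is applied on the positive range of $t$ relevant to $t=\|x\|\ge 0$, consistent with the logarithm appearing in $p$; this is the regime in which the generalized eigenvalues of interest actually occur via Lemma \ref{lem:eig}.
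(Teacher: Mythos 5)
Your proposal is correct and follows essentially the same route as the paper, which simply derives the corollary from Lemma \ref{lem:eig} together with the determinant identity \eqref{M22} (nonemptiness via the coercivity-guaranteed global minimizer, the trichotomy via the factorization). Your extra care about the coupling between $x$ and $t$ in \eqref{M22} --- peeling off the singular case $\det(Q+\sigma tI)=0$ before solving for $x$ --- is a sound reading of a step the paper leaves implicit.
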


\section{On local nonglobal minimum of (TRS)}\label{sec3}
We first prove a monotonicity property for KKT points of (TRS).
\begin{lem}\label{lem:kkt1}
Let $(x_1;\lambda_1)$ and $(x_2;\lambda_2)$ be two KKT points for (TRS). Then
\[
\lambda_2\ge(>)\lambda_1 ~ \Longrightarrow~ f(x_2)\le (<) f(x_1).
\]
\end{lem}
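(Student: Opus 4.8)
The plan is to compare the two objective values through one exact identity for quadratics and then to collapse it using the KKT structure. Since $f$ is quadratic with $\nabla f(x)=Qx+c$, any two points satisfy the exact expansion
\[
f(x_1)-f(x_2)=\tfrac12\,(x_1-x_2)^T\bigl(\nabla f(x_1)+\nabla f(x_2)\bigr),
\]
because, with $Q$ symmetric, $(x_1-x_2)^TQ(x_1+x_2)=x_1^TQx_1-x_2^TQx_2$, so averaging the two first-order expansions makes the quadratic remainders cancel. I would confirm this by a one-line expansion of $\tfrac12 x^TQx+c^Tx$.

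Next I would insert the stationarity condition \eqref{kkt0}, namely $\nabla f(x_i)=Qx_i+c=-\lambda_i x_i$, to obtain
\[
f(x_1)-f(x_2)=\tfrac12\bigl(\lambda_2 x_2^Tx_2-\lambda_1 x_1^Tx_1+(\lambda_1-\lambda_2)\,x_1^Tx_2\bigr).
\]
The decisive simplification comes from complementarity \eqref{kkt1}: every KKT point obeys $\lambda_i(x_i^Tx_i-1)=0$, i.e. $\lambda_i x_i^Tx_i=\lambda_i$. Substituting this into the two norm terms makes the expression factor cleanly,
\[
f(x_1)-f(x_2)=\tfrac12\,(\lambda_2-\lambda_1)\bigl(1-x_1^Tx_2\bigr).
\]
I expect this factorization to be the conceptual heart of the argument; once it is in hand the monotonicity is almost immediate.

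Finally I would settle the sign of $1-x_1^Tx_2$. By primal feasibility \eqref{kkt2} we have $x_i^Tx_i\le1$, so Cauchy--Schwarz gives $x_1^Tx_2\le\|x_1\|\,\|x_2\|\le1$, whence $1-x_1^Tx_2\ge0$. Therefore $\lambda_2\ge\lambda_1$ forces $f(x_1)-f(x_2)\ge0$, i.e. $f(x_2)\le f(x_1)$, proving the non-strict claim. For the strict statement, when $\lambda_2>\lambda_1$ I would exclude $x_1^Tx_2=1$: equality there would force $\|x_1\|=\|x_2\|=1$ together with the equality case of Cauchy--Schwarz, hence $x_1=x_2$; but then \eqref{kkt0} yields $(\lambda_1-\lambda_2)x_1=0$ with $x_1\ne0$, contradicting $\lambda_1\ne\lambda_2$. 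Thus $1-x_1^Tx_2>0$ and $f(x_2)<f(x_1)$. The only step demanding genuine care is this equality analysis; everything else reduces to the two routine identities above.
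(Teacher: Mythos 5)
Your proposal is correct and follows essentially the same route as the paper: the averaged-gradient identity you start from is just a repackaging of the paper's step of multiplying the two stationarity equations by $x_1^T$ and $x_2^T$, and your factored form $\tfrac12(\lambda_2-\lambda_1)(1-x_1^Tx_2)$ together with the Cauchy--Schwarz bound and the equality-case analysis (forcing $x_1=x_2$ and contradicting $\lambda_1\neq\lambda_2$ via the stationarity equation) is exactly the paper's argument. Your presentation is marginally cleaner in that complementarity is applied before, rather than after, the sign analysis, but the content is identical.
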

\begin{proof}
According to (\ref{kkt0}) and the definitions of $(x_1;\lambda_1)$ and $(x_2;\lambda_2)$, we have
\begin{eqnarray}
&& Qx_1+c=-\lambda_1 x_1,~Qx_2+c=-\lambda_2 x_2. \label{x01}
\end{eqnarray}
By multiplying $x_1^T$ and $x_2^T$ to both sides of the two equalities in (\ref{x01}), respectively, we have
\begin{eqnarray}
&& x_1^TQx_1+x_1^Tc=-\lambda_1 x_1^Tx_1,~~x_1^TQx_2+x_1^Tc=-\lambda_2 x_1^Tx_2, \nonumber \\
&& x_2^TQx_2+x_2^Tc=-\lambda_2 x_2^Tx_2,~~x_2^TQx_1+x_2^Tc=-\lambda_1 x_2^Tx_1. \nonumber
\end{eqnarray}
Then, it holds that
\begin{eqnarray} 2(f(x_2)-f(x_1))&=&(x_2^TQx_2+x_2^Tc)-(x_1^TQx_1+x_1^Tc)+x_2^Tc-x_1^Tc \nonumber\\
&=&(-\lambda_2 x_2^Tx_2)-(-\lambda_1 x_1^Tx_1)+(-\lambda_1 x_2^Tx_1)-(-\lambda_2 x_1^Tx_2)\nonumber\\
&=&(\lambda_2-\lambda_1)x_1^Tx_2+\lambda_1x_1^Tx_1-\lambda_2x_2^Tx_2\label{l00}\\
&\le&(\lambda_2-\lambda_1)\cdot 1+\lambda_1x_1^Tx_1-\lambda_2x_2^Tx_2\label{l01}\\
&=&\lambda_1(x_1^Tx_1-1)-\lambda_2(x_2^Tx_2-1)=0, \label{l02}
\end{eqnarray}
where (\ref{l01}) follows from Cauchy-Schwarz inequality and (\ref{kkt2}), more precisely,
\begin{equation}
x_1^Tx_2\le \|x_1\|\cdot\|x_2\|\le 1,\label{CS}
\end{equation}
the last equality in (\ref{l02}) holds due to the complementarity condition (\ref{kkt1}). 

The equality $f(x_2)=f(x_1)$ holds true if and only if the inequality in (\ref{l01}) holds as an equality, that is
\begin{equation}
(\lambda_2-\lambda_1)(x_1^Tx_2-1)=0.\label{lbd}
\end{equation}
In case of $\lambda_2>\lambda_1$,  (\ref{lbd}) is equivalent to
\begin{equation}
x_1^Tx_2=1. \label{lbd2}
\end{equation}
It implies from \eqref{CS} and \eqref{lbd2} that
\begin{equation}
x_1=x_2\neq 0. \label{lbd3}
\end{equation}
Combining the assumption $\lambda_2>\lambda_1$ with  \eqref{kkt0} and \eqref{lbd3}
yields a contradiction. Therefore, $f(x_2)=f(x_1)$ if and only if $\lambda_2=\lambda_1$. The proof is complete.
\end{proof}
\begin{rem}
Let $(x_1;\lambda_1)$ and $(x_2;\lambda_2)$ be two KKT points for (TRS) satisfying $\|x_1\|=\|x_2\|$, since
\[
x_1^Tx_1-x_1^Tx_2=x_2^Tx_2-x_1^Tx_2=\frac{1}{2}\left\|x_1-x_2\right\|^2,
\]
it follows from \eqref{l00} that
\[
f(x_2)-f(x_1)=\frac{\lambda_1-\lambda_2}{4}\|x_1-x_2\|^2,
\]
which is presented in \cite{G80} for the relaxed least square problem.
\end{rem}

As a corollary of Lemma \ref{lem:kkt1}, the following result holds.

\begin{cor}\label{cor0}
If $(x^*;\lambda^*)$ is KKT point of (TRS), $x^*$ is a global minimizer, then $\lambda^*$ is the largest Lagrangian multiplier among all KKT points. Moreover, all the global minimizers of (TRS) share the same Lagrangian multiplier $\lambda^*$.
\end{cor}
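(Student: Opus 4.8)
The plan is to derive both assertions directly from the strict monotonicity established in Lemma~\ref{lem:kkt1}, using a contradiction argument for the first claim and an antisymmetry argument for the second. No new analytic work is needed; the corollary is a clean consequence of the comparison $\lambda_2>\lambda_1\Rightarrow f(x_2)<f(x_1)$.

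First I would show that $\lambda^*$ is the largest multiplier. Let $(x;\lambda)$ be an arbitrary KKT point of (TRS). Every KKT point is feasible by \eqref{kkt2}, so global optimality of $x^*$ gives $f(x^*)\le f(x)$. Suppose, for contradiction, that $\lambda>\lambda^*$. Applying the strict implication of Lemma~\ref{lem:kkt1} with $(x_2;\lambda_2)=(x;\lambda)$ and $(x_1;\lambda_1)=(x^*;\lambda^*)$ yields $f(x)<f(x^*)$, contradicting the inequality just obtained. Hence $\lambda\le\lambda^*$ for every KKT point, which is precisely the statement that $\lambda^*$ is the largest Lagrangian multiplier among all KKT points.

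For the second claim I would apply the first part symmetrically. Let $(x_1^*;\lambda_1^*)$ and $(x_2^*;\lambda_2^*)$ be two KKT points whose primal components are both global minimizers. Applying the first part to $x_1^*$ shows its multiplier dominates all others, so $\lambda_1^*\ge\lambda_2^*$; applying it to $x_2^*$ gives $\lambda_2^*\ge\lambda_1^*$. Therefore $\lambda_1^*=\lambda_2^*$, so all global minimizers share the same Lagrangian multiplier. (That each global minimizer has a single well-defined multiplier follows from Lemma~\ref{lem:g}, since $\lambda^*\ge-\alpha_1>0$ forces $\|x^*\|=1$ and $\lambda^*$ is then determined by \eqref{kkt0}, but this uniqueness is not actually required by the antisymmetry argument.)

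I do not expect a serious obstacle here, as the whole argument is immediate once Lemma~\ref{lem:kkt1} is available. The only point demanding care is to invoke the \emph{strict} form of the monotonicity in the contradiction step: the nonstrict form would give only $f(x)\le f(x^*)$, which is consistent with global optimality and thus insufficient to reach a contradiction. One should also explicitly note that KKT feasibility places $x$ in the admissible set, so that the global optimality of $x^*$ may legitimately be compared against it.
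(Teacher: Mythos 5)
Your proposal is correct and follows exactly the route the paper intends: the paper states this result as an immediate consequence of Lemma~\ref{lem:kkt1} without writing out the details, and your contradiction-plus-antisymmetry argument is precisely that omitted derivation. Your remark that the \emph{strict} form of the monotonicity is what makes the contradiction work is the right point of care.
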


Besides, according to Lemma \ref{lem:kkt1}, Lemma \ref{lem:g}, Lemma \ref{lem:l3} and Corollary \ref{cor:eig1}, we can recover the following result for the global minimizer of (TRS).

\begin{cor}[\cite{A17}, Theorem 3.2]\label{cor1}
If $x^*$ is a global minimizer of (TRS), $\lambda^*$ is the corresponding Lagrangian multiplier, then $\lambda^*$ is equal to the largest real generalized eigenvalue of $M_1(\lambda)$.
\end{cor}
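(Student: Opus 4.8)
The plan is to verify two things: that $\lambda^*$ is itself a real generalized eigenvalue of $M_1(\lambda)$, and that every real generalized eigenvalue is at most $\lambda^*$. For the first, I would start from Lemma~\ref{lem:g}: because $x^*$ is a global minimizer, we have ${x^*}^Tx^*=1$ and $(x^*;\lambda^*)$ satisfies \eqref{kkt0} with $\lambda^*\ge-\alpha_1>0$, so $(x^*;\lambda^*)$ meets the hypotheses \eqref{kkt0} and \eqref{kkt3} of Lemma~\ref{lem:l3}. That lemma gives ${\rm det}\,M_1(\lambda^*)=0$ at once, placing $\lambda^*$ in the set of real generalized eigenvalues of $M_1(\lambda)$, which is nonempty by Corollary~\ref{cor:eig1}.

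For the second, I would take an arbitrary real generalized eigenvalue $\mu$, so ${\rm det}\,M_1(\mu)=0$, and argue $\mu\le\lambda^*$. When $\mu\le 0$ this is trivial since $\lambda^*>0$, so assume $\mu>0$ and apply Corollary~\ref{cor:eig1} to branch. If $-\mu$ is an eigenvalue of $Q$, say $\mu=-\alpha_i$, then $\mu=-\alpha_i\le-\alpha_1\le\lambda^*$, using the ordering of the $\alpha_i$ together with the bound from Lemma~\ref{lem:g}. If instead $\mu$ is a zero of the secular function \eqref{secular}, then $-\mu\notin\{\alpha_1,\dots,\alpha_n\}$, so $x_\mu:=-(Q+\mu I)^{-1}c$ is well defined and solves \eqref{kkt0}, and $\varphi(\mu)=0$ forces $x_\mu^Tx_\mu=1$; since $\mu>0$, the pair $(x_\mu;\mu)$ satisfies all of \eqref{kkt0}--\eqref{kkt2} and is therefore a KKT point of (TRS). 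Corollary~\ref{cor0}, the consequence of the monotonicity Lemma~\ref{lem:kkt1}, then yields $\mu\le\lambda^*$ because the global minimizer carries the largest multiplier among all KKT points.

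Putting the pieces together, $\lambda^*$ is a real generalized eigenvalue of $M_1(\lambda)$ that bounds all the others, hence it is the largest. The heart of the argument, and the step I expect to be the main obstacle to phrase cleanly, is the secular-zero case: the monotonicity property is exactly what upgrades ``$x^*$ is a global minimizer'' into the spectral statement ``$\lambda^*$ is the largest multiplier,'' so Lemma~\ref{lem:kkt1} (via Corollary~\ref{cor0}) carries the real load. The one technical point I would be careful not to skip is the positivity $\mu>0$, since it is precisely what makes $(x_\mu;\mu)$ respect the sign constraint $\lambda\ge0$ in \eqref{kkt2} and thus qualify as an admissible KKT point to which Corollary~\ref{cor0} applies.
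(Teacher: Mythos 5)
Your proposal is correct and follows the same route the paper intends: the paper only sketches this corollary by citing Lemma~\ref{lem:kkt1}, Lemma~\ref{lem:g}, Lemma~\ref{lem:l3} and Corollary~\ref{cor:eig1}, and your argument is exactly the natural fleshing-out of that citation chain (membership via Lemmas~\ref{lem:g} and~\ref{lem:l3}, maximality via Corollary~\ref{cor:eig1} feeding each eigenvalue either into the ordering $-\alpha_i\le-\alpha_1\le\lambda^*$ or into a genuine KKT point governed by the monotonicity of Lemma~\ref{lem:kkt1} through Corollary~\ref{cor0}). The attention to the sign condition $\mu>0$ needed for \eqref{kkt2} is a point the paper glosses over, and you handle it correctly.
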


We now present the main result of this section.
\begin{thm}\label{thm:kkt2}
Let $(x_1;\lambda_1)$ and $(x_2;\lambda_2)$ be any two different KKT points for (TRS).
If $x_1$ is a local nonglobal minimizer of (TRS) and $x_2$ is not a global minimizer of (TRS), then $f(x_1)<f(x_2)$.
\end{thm}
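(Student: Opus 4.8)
The plan is to convert the comparison of objective values into a comparison of Lagrange multipliers, and then to locate $\lambda_2$ relative to $\lambda_1$ using the structure of the secular function. By the monotonicity property of Lemma~\ref{lem:kkt1}, it suffices to prove $\lambda_1>\lambda_2$, for then $f(x_1)<f(x_2)$ follows at once. Before starting, I would collect what is known about the local nonglobal multiplier: Lemmas~\ref{lem:l10} and~\ref{lem:l2} give $0<\lambda_1<-\alpha_1$, that $\lambda_1$ is a zero of the secular function $\varphi$ in $(-\alpha_2,-\alpha_1)$, and that $\varphi'(\lambda_1)>0$; Lemma~\ref{lem:l0} furthermore forces $\alpha_1<\alpha_2$, so that $-\lambda_1\in(\alpha_1,\alpha_2)$ is not an eigenvalue of $Q$ and $Q+\lambda_1 I$ is invertible.

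First I would dispose of the degenerate possibility $\lambda_2=\lambda_1$. Since $\|x_1\|=1$, the identity \eqref{kkt0} and the invertibility of $Q+\lambda_1 I$ show that $x_1=-(Q+\lambda_1 I)^{-1}c$ is the unique point associated with $\lambda_1$; hence $\lambda_2=\lambda_1$ would force $x_2=x_1$, contradicting that $(x_1;\lambda_1)$ and $(x_2;\lambda_2)$ are different. I would then assume, toward a contradiction, that $\lambda_2>\lambda_1$. As $\lambda_2>\lambda_1>0$, the complementarity conditions \eqref{kkt1}--\eqref{kkt2} give $\|x_2\|=1$.

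The core of the proof is to rule out $\lambda_2>\lambda_1$, and I would split this into two cases according to the size of $\lambda_2$. If $\lambda_2\ge -\alpha_1$, then $\|x_2\|=1$ together with \eqref{kkt0} satisfies Gay's sufficient condition in Lemma~\ref{lem:g}, so $x_2$ is a global minimizer, contradicting the hypothesis. The remaining case $\lambda_1<\lambda_2<-\alpha_1$ is the delicate one: here $-\lambda_2\in(\alpha_1,\alpha_2)$ is not an eigenvalue of $Q$, so $x_2=-(Q+\lambda_2 I)^{-1}c$, and $\|x_2\|=1$ forces $\varphi(\lambda_2)=0$. However, on $(-\alpha_2,-\alpha_1)$ the function $\varphi$ is strongly convex; since $\varphi(\lambda_1)=0$ and $\varphi'(\lambda_1)>0$, convexity makes $\varphi$ strictly increasing and positive on $(\lambda_1,-\alpha_1)$, so $\varphi(\lambda_2)>0$, a contradiction.

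I expect this last convexity argument to be the main obstacle, since it is the step that genuinely exploits the strict sign condition $\varphi'(\lambda_1)>0$ from the Wang--Xia characterization to certify that $\lambda_1$ is the rightmost zero of $\varphi$ inside $(-\alpha_2,-\alpha_1)$; everything else is routine bookkeeping with the KKT system and Gay's criterion. Once both cases are excluded we conclude $\lambda_2<\lambda_1$, and Lemma~\ref{lem:kkt1} yields $f(x_1)<f(x_2)$, which is exactly the claim.
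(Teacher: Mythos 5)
Your proof is correct and follows essentially the same route as the paper: reduce to comparing Lagrange multipliers via Lemma~\ref{lem:kkt1}, then use Gay's condition (Lemma~\ref{lem:g}) and the fact that $\lambda_1$ is the largest zero of $\varphi$ in $(-\alpha_2,-\alpha_1)$ to force $\lambda_2<\lambda_1$. Your version is organized as a proof by contradiction and spells out a few details the paper leaves implicit (the exclusion of $\lambda_2=\lambda_1$ and the convexity argument on $(\lambda_1,-\alpha_1)$), but the ingredients are identical.
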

\begin{proof}
According to Lemma \ref{lem:kkt1}, it is sufficient to prove $\lambda_2<\lambda_1$.

If $x_2^Tx_2<1$, then (\ref{kkt1}) implies that $\lambda_2=0$. According to Lemma \ref{lem:l2}, it holds that $\lambda_1>0$. Then, we obtain
\[
\lambda_2=0<\lambda_1.
\]

If $x_2^Tx_2=1$, then either $-\lambda_2\in\{\alpha_1,\alpha_2,\cdots,\alpha_n\}$
or
$\lambda_2$ is a zero point of $\varphi(\lambda)$ (\ref{secular}). Since $x_2$ is not a global
minimizer of (TRS), it follows from Lemma \ref{lem:g} that
\[
\lambda_2<-\alpha_1.
\]
According to Lemma \ref{lem:l1}, $\lambda_1$ is the largest zero point of $\varphi(\lambda)$ in $(-\alpha_2,-\alpha_1)$. Then we have
\[
\lambda_2<\lambda_1<-\alpha_1.\nonumber
\]
The proof is complete.
\end{proof}

According to Lemma \ref{lem:l0}, if there is a local nonglobal minimizer of (TRS), then we have $\alpha_1\neq \alpha_2$ and $c_1\neq 0$. Then, ${\rm det}~M_1(-\alpha_1)\neq 0$. Otherwise, it is not difficult to verify that $c_1=0$, which leads to a contradiction. Combining ${\rm det}~M_1(-\alpha_1)\neq 0$, the proof of Theorem \ref{thm:kkt2}, Lemma \ref{lem:g}, Lemma \ref{lem:l3} and Corollary \ref{cor:eig1} together, we can recover the following result.

\begin{cor}[\cite{S17}, Theorem 5]\label{cor2}
If $\underline{x}$ is a local nonglobal minimizer of (TRS), $\underline{\lambda}$ is the corresponding Lagrangian multiplier, then $\underline{\lambda}$ is equal to the second largest real generalized eigenvalue of $M_1(\lambda)$.
\end{cor}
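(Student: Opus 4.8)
The plan is to pin down the real generalized eigenvalues of $M_1(\lambda)$ that are at least as large as $\underline{\lambda}$, and to show there are exactly two of them: $\underline{\lambda}$ itself and the multiplier $\lambda^*$ of the global minimizer, with $\lambda^* > \underline{\lambda}$. Since Corollary \ref{cor1} already identifies $\lambda^*$ as the \emph{largest} real generalized eigenvalue, establishing these two facts yields that $\underline{\lambda}$ is the second largest, as claimed.

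First I would collect what the existence of a local nonglobal minimizer forces. By Lemma \ref{lem:l0} we have $\alpha_1 < \alpha_2$ and $c_1 \neq 0$; by Lemma \ref{lem:l10} there is $\underline{\lambda} \in (-\alpha_2, -\alpha_1)$ which is a zero of $\varphi$ with $\varphi'(\underline{\lambda}) > 0$; and by Lemma \ref{lem:l2} we have $\underline{x}^T\underline{x} = 1$. Hence $(\underline{x}; \underline{\lambda})$ satisfies \eqref{kkt0} and \eqref{kkt3}, so Lemma \ref{lem:l3} gives $\det M_1(\underline{\lambda}) = 0$; that is, $\underline{\lambda}$ is a real generalized eigenvalue of $M_1(\lambda)$. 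Moreover $\underline{\lambda} < -\alpha_1 \le \lambda^*$ by Lemma \ref{lem:g}, so $\underline{\lambda} < \lambda^*$.

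The core of the argument is to rule out any real generalized eigenvalue strictly between $\underline{\lambda}$ and $\lambda^*$. By Corollary \ref{cor:eig1} every such eigenvalue $\mu$ either satisfies $-\mu \in \{\alpha_1, \ldots, \alpha_n\}$ or is a zero of $\varphi$. For the first type, each $-\alpha_k$ with $k \ge 2$ obeys $-\alpha_k \le -\alpha_2 < \underline{\lambda}$, while $-\alpha_1$ is excluded by the computation $\det M_1(-\alpha_1) \neq 0$: evaluating the polynomial form of \eqref{M11} at $\lambda = -\alpha_1$ eliminates all terms except $j=1$, giving $\det M_1(-\alpha_1) = (-1)^{n+1} c_1^2 \prod_{i \ge 2}(\alpha_i - \alpha_1)^2$, which is nonzero since $\alpha_1$ is simple and $c_1 \neq 0$. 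For the second type, $\varphi$ is strongly convex on $(-\alpha_2, -\alpha_1)$ and so has at most two zeros there, and the one with positive derivative, namely $\underline{\lambda}$, is the larger; thus no zero of $\varphi$ in $(-\alpha_2, -\alpha_1)$ exceeds $\underline{\lambda}$. On $(-\alpha_1, +\infty)$ we have $\varphi'(\lambda) = -2\sum_i c_i^2/(\alpha_i+\lambda)^3 < 0$, so $\varphi$ decreases strictly from $+\infty$ (at $-\alpha_1^+$, using $c_1 \neq 0$) to $-1$ and crosses zero exactly once, at $\lambda^*$; hence there is no zero of $\varphi$ in $(-\alpha_1, \lambda^*)$.

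Putting these together, no real generalized eigenvalue lies in the open interval $(\underline{\lambda}, \lambda^*)$; combined with $\lambda^*$ being the largest (Corollary \ref{cor1}) and $\underline{\lambda}$ being a generalized eigenvalue strictly below it, we conclude that $\underline{\lambda}$ is the second largest real generalized eigenvalue of $M_1(\lambda)$. I expect the main obstacle to be the boundary point $-\alpha_1$: one must verify that it is \emph{not} a generalized eigenvalue, so that both the interval $(\underline{\lambda}, \lambda^*)$ is free of eigenvalues and $\lambda^* > -\alpha_1$ strictly (the easy case). This hinges entirely on the determinant evaluation above, where simplicity of $\alpha_1$ and $c_1 \neq 0$ are exactly what is needed.
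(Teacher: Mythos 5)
Your argument is correct and follows essentially the same route the paper sketches: use Lemma \ref{lem:l0} to get $\alpha_1<\alpha_2$ and $c_1\neq 0$, verify $\det M_1(-\alpha_1)\neq 0$, classify the remaining generalized eigenvalues via Corollary \ref{cor:eig1} together with the convexity/monotonicity of $\varphi$ as in the proof of Theorem \ref{thm:kkt2}, and invoke Corollary \ref{cor1} for the top eigenvalue. Your explicit evaluation $\det M_1(-\alpha_1)=(-1)^{n+1}c_1^2\prod_{i\ge 2}(\alpha_i-\alpha_1)^2$ simply fills in the step the paper leaves as "not difficult to verify."
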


\section{On local nonglobal minimum of ($p$-RS)}\label{sec4}
We first prove a monotonicity property for critical points of ($p$-RS).
\begin{lem}\label{lem:cp}
Let $x_1$ and $x_2$ be two critical points of ($p$-RS) for $p>2$. Then
\[
\|x_2\|\ge(>)\|x_1\| ~ \Longrightarrow~ g(x_2)\le (<) g(x_1).
\]
\end{lem}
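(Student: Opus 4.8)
The plan is to mimic the proof of Lemma \ref{lem:kkt1} by treating $\mu_i:=\sigma\|x_i\|^{p-2}$ as the analogue of the Lagrange multiplier. First I would note that the criticality condition (\ref{nc1}) reads $Qx_i+c=-\mu_i x_i$, which has exactly the form of the first KKT equation (\ref{kkt0}) of (TRS). Moreover, since $p>2$, the map $s\mapsto s^{p-2}$ is strictly increasing on $[0,\infty)$, so the hypothesis $\|x_2\|\ge(>)\|x_1\|$ is equivalent to $\mu_2\ge(>)\mu_1$. Thus it suffices to establish the monotonicity of $g$ in terms of $\mu_1,\mu_2$.

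Next, repeating the bilinear bookkeeping of Lemma \ref{lem:kkt1}, I would multiply $Qx_i+c=-\mu_i x_i$ on the left by $x_1^T$ and $x_2^T$ to produce four scalar identities. Using the symmetry of $Q$, these give $x_2^Tc-x_1^Tc=(\mu_2-\mu_1)x_1^Tx_2$ together with $x_i^TQx_i+x_i^Tc=-\mu_i\|x_i\|^2$. The one genuinely new ingredient compared with (TRS) is the regularization term: here $\mu_i\|x_i\|^2=\sigma\|x_i\|^p$, which is precisely what lets the terms $-\mu_i\|x_i\|^2$ recombine with $\tfrac{\sigma}{p}\|x_i\|^p$. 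After substitution I expect to reach the identity
\[
2\bigl(g(x_2)-g(x_1)\bigr)=\sigma\,\frac{2-p}{p}\bigl(\|x_2\|^p-\|x_1\|^p\bigr)+(\mu_2-\mu_1)\,x_1^Tx_2.
\]

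Then I would invoke the Cauchy--Schwarz inequality $x_1^Tx_2\le\|x_1\|\,\|x_2\|$. Because $\mu_2-\mu_1\ge0$ under the hypothesis, this yields the upper bound
\[
2\bigl(g(x_2)-g(x_1)\bigr)\le\sigma\Bigl[\tfrac{2-p}{p}\bigl(\|x_2\|^p-\|x_1\|^p\bigr)+\bigl(\|x_2\|^{p-2}-\|x_1\|^{p-2}\bigr)\|x_1\|\,\|x_2\|\Bigr].
\]
The crux, and the step I expect to be the main obstacle, is showing the bracket is nonpositive. Writing $a=\|x_1\|$, $b=\|x_2\|$ with $b\ge a\ge0$ (the degenerate case $b=0$ forces $x_1=x_2=0$), I would normalize by $b^p$ and set $r=a/b\in[0,1]$, reducing the claim to $\eta(r):=\tfrac{2-p}{p}(1-r^p)+r-r^{p-1}\le0$ on $[0,1]$. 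A short computation gives $\eta(1)=\eta'(1)=0$ and $\eta''(r)=(p-1)(p-2)r^{p-3}(r-1)<0$ for $r\in(0,1)$, so $\eta$ is strictly concave and its derivative stays positive up to $r=1$; hence $\eta(r)<0$ for $r\in[0,1)$ while $\eta(1)=0$. This delivers the strict inequality $g(x_2)<g(x_1)$ when $\|x_2\|>\|x_1\|$, whereas the equality case $\|x_2\|=\|x_1\|$ makes both $\mu_2-\mu_1$ and the bracket vanish, giving $g(x_2)=g(x_1)$, as required.
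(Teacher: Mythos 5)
Your proposal is correct and follows essentially the same route as the paper's proof: the same bilinear bookkeeping yields the identity $2(g(x_2)-g(x_1))=\sigma\frac{2-p}{p}(\|x_2\|^p-\|x_1\|^p)+(\mu_2-\mu_1)x_1^Tx_2$ (the paper's \eqref{cp01}), Cauchy--Schwarz gives the same bound, and your $\eta(r)$ is exactly the paper's $\phi(\alpha)$ under $r=1/\alpha$ (indeed $\phi(\alpha)=\alpha^p\eta(1/\alpha)$), analyzed by the same vanishing-at-the-endpoint second-derivative argument. The only cosmetic difference is that you normalize by $\|x_2\|^p$, which absorbs the case $x_1=0$ into $r=0$ instead of treating it separately as the paper does.
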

\begin{proof}
According to (\ref{nc1}) and the definitions of $x_1$ and $x_2$, we have
\begin{eqnarray}
&&(Q+\sigma\|x_1\|^{p-2}I)x_1+c=0,
~~(Q+\sigma\|x_2\|^{p-2}I)x_2+c=0. \label{c01}
\end{eqnarray}
By multiplying $x_1^T$ and $x_2^T$ to both sides of the two equalities in (\ref{c01}), respectively, we have
\begin{eqnarray}
&& x_1^TQx_1+x_1^Tc+\sigma\|x_1\|^{p}=0,
~~x_1^TQx_2+x_1^Tc+\sigma\|x_2\|^{p-2}x_1^Tx_2=0, \nonumber \\
&& x_2^TQx_2+x_2^Tc+\sigma\|x_2\|^{p}=0,
~~x_2^TQx_1+x_2^Tc+\sigma\|x_1\|^{p-2}x_1^Tx_2=0. \nonumber
\end{eqnarray}
Then, $g(x_2)-g(x_1)$ is equal to
\begin{eqnarray} &&\frac{1}{2}((x_2^TQx_2+x_2^Tc)-(x_1^TQx_1+x_1^Tc))+\frac{1}{2}(x_2^Tc-x_1^Tc) +\frac{\sigma}{p}(\|x_2\|^p-\|x_1\|^p)
\nonumber\\
&=&(\frac{\sigma}{p}-\frac{\sigma}{2})(\|x_2\|^p-\|x_1\|^p)
+\frac{\sigma}{2}(\|x_2\|^{p-2}-\|x_1\|^{p-2})x_1^Tx_2.
\label{cp01}
\end{eqnarray}
Then by (\ref{cp01}), we have
\begin{eqnarray}
&&\|x_2\|=\|x_1\|~\Longrightarrow~g(x_2)=g(x_1).\label{eq}
\end{eqnarray}
Next, we show that
\begin{eqnarray}
&&\|x_2\|>\|x_1\|~\Longrightarrow~g(x_2)<g(x_1).\label{ineq}
\end{eqnarray}
If $x_1=0$ and $x_2\neq 0$, it follows from (\ref{cp01}) and $p>2$ that
\begin{eqnarray}
g(x_2)-g(x_1)=
&&(\frac{\sigma}{p}-\frac{\sigma}{2})\|x_2\|^p< 0.\label{eq0}
\end{eqnarray}
If $x_1\neq0$, let $\|x_2\|=\alpha\|x_1\|$, where $\alpha\ge 1$.
Substituting $\|x_2\|=\alpha\|x_1\|$ to (\ref{cp01}) yields that
\begin{eqnarray}
g(x_2)-g(x_1)&=&(\frac{\sigma}{p}-\frac{\sigma}{2})
(\alpha^p-1)\|x_1\|^p
+\frac{\sigma}{2}(\alpha^{p-2}-1)\|x_1\|^{p-2}\cdot x_1^Tx_2\nonumber\\
&\le&\frac{\sigma}{2}\|x_1\|^p\cdot
\left(\frac{2-p}{p}(\alpha^p-1)+\alpha^{p-1}-\alpha\right).\label{cp02}
\end{eqnarray}
where (\ref{cp02}) follows from Cauchy-Schwarz inequality, i.e.,
\[
x_1^Tx_2\le \|x_1\|\cdot\|x_2\|= \alpha\|x_1\|^2.
\]
Define
\[
\phi(\alpha)=\frac{2-p}{p}(\alpha^p-1)+\alpha^{p-1}-\alpha.
\]
The first and second derivatives of $\phi(\alpha)$ can be written as follows:
\begin{eqnarray}
\phi'(\alpha)&=&(p-1)\alpha^{p-2}-(p-2)\alpha^{p-1}-1,\nonumber\\
\phi''(\alpha)&=&(p-1)(p-2)(1-\alpha)\alpha^{p-3}.\label{phi2}
\end{eqnarray}
If $\alpha=1$, we can verify that
\begin{eqnarray}
&&\phi(1)=\phi'(1)=\phi''(1)=0.\label{phi3}
\end{eqnarray}
If $\alpha>1$, according to (\ref{phi2}) and (\ref{phi3}), we have
\begin{eqnarray}
&&\phi''(\alpha)<\phi''(1)=0~\Longrightarrow~
\phi'(\alpha)<\phi'(1)=0 \Longrightarrow~\phi(\alpha)<\phi(1)=0.\label{neq0}
\end{eqnarray}
Then (\ref{ineq}) holds due to (\ref{eq0}) and (\ref{neq0}). Combining (\ref{eq}) with (\ref{ineq}), the proof is complete.
\end{proof}

If $p=3$, according to Lemma \ref{lem:cp}, Lemma \ref{lem:p1}, Lemma \ref{lem:eig} and Corollary \ref{cor:eig2}, we recover the following result.
\begin{cor}[\cite{L20}, Corollary 3.2]\label{cor3}
If $x^*$ is a global minimizer of ($p$-RS) for $p = 3$, then $t^*=\|x^*\|$ is equal to the largest real generalized eigenvalue of $M_2(t)$.
\end{cor}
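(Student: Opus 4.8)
The plan is to mirror the argument used for the global minimizer of (TRS) in Corollary \ref{cor1}, reading $t=\|x\|$ throughout since $p=3$ makes $\|x\|^{p-2}=\|x\|$. Because a global minimizer is a critical point, Lemma \ref{lem:eig} already tells us that $t^*=\|x^*\|$ is a real generalized eigenvalue of $M_2(t)$; hence it suffices to prove that every real generalized eigenvalue $t$ of $M_2(t)$ obeys $t\le t^*$, and maximality follows immediately.

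First I would record that $t^*>0$: if $x^*=0$ then $\|x^*\|^{p-2}=0$ and Lemma \ref{lem:p1} would force $\alpha_1\ge 0$, contradicting $\alpha_1<0$, so $t^*=\|x^*\|>0$. I would then classify the real generalized eigenvalues through Corollary \ref{cor:eig2}. Any eigenvalue with $t\le 0$ (in particular the type-(1) value $t=0$) lies strictly below $t^*>0$. For a type-(2) eigenvalue, $-\sigma t$ is an eigenvalue of $Q$, so $t=-\alpha_i/\sigma\le -\alpha_1/\sigma$; on the other hand Lemma \ref{lem:p1} gives $\sigma\|x^*\|^{p-2}+\alpha_1\ge 0$, i.e. $\sigma t^*+\alpha_1\ge 0$, so $t^*\ge -\alpha_1/\sigma\ge t$.

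The type-(3) eigenvalues are the positive zeros of $p(t)$, and the key step is to turn such a zero back into a genuine critical point. A zero of $p(t)$ is a zero of $h(t)$ with $Q+\sigma tI$ nonsingular, so $x:=-(Q+\sigma tI)^{-1}c$ solves \eqref{nc2} and satisfies $\|x\|^2=\sum_{i}c_i^2/(\sigma t+\alpha_i)^2=t^{2/(p-2)}$ by \eqref{h0}; hence $\|x\|^{p-2}=t$, which for $p=3$ reads $\|x\|=t$, and $x$ is a critical point of ($p$-RS). Applying the monotonicity of Lemma \ref{lem:cp} to $x$ and the global minimizer $x^*$, which necessarily has maximal norm among all critical points, yields $\|x\|\le\|x^*\|$, i.e. $t\le t^*$. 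Collecting the three cases shows that $t^*$ dominates every real generalized eigenvalue and is one of them, so it is the largest.

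I expect the main obstacle to be the type-(2) eigenvalues: unlike types (1) and (3) they are not generated by any critical point, so Lemma \ref{lem:cp} says nothing about them, and one must instead control them with the explicit global-optimality inequality $\sigma\|x^*\|^{p-2}+\alpha_1\ge 0$ of Lemma \ref{lem:p1} --- exactly the role played by Gay's bound $\lambda^*\ge -\alpha_1$ (Lemma \ref{lem:g}) in the (TRS) case. A secondary subtlety is the identification of a positive zero of $p(t)$ with an actual critical point of norm precisely $t$, which is what licenses the appeal to monotonicity.
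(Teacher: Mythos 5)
Your argument is correct and is essentially the paper's intended proof: the paper gives no written-out argument but states that the corollary follows by combining Lemma \ref{lem:cp}, Lemma \ref{lem:p1}, Lemma \ref{lem:eig} and Corollary \ref{cor:eig2}, which are exactly the four ingredients you use (eigenvalue membership via Lemma \ref{lem:eig}, the case split via Corollary \ref{cor:eig2}, the bound $\sigma t^*+\alpha_1\ge 0$ for the $Q$-eigenvalue case, and monotonicity for the secular-zero case). You have simply filled in the details the paper leaves implicit.
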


Now we present the main result of this section.
\begin{thm}\label{thm:cp2}
Let $x_1$ and $x_2$ be any two different critical points of ($p$-RS) for $p>2$.
If $x_1$ is a local nonglobal minimizer of ($p$-RS) and $x_2$ is not a global minimizer, then $g(x_1)<g(x_2)$.
\end{thm}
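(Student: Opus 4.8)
The plan is to transcribe the proof of Theorem \ref{thm:kkt2} into the $p$-regularized setting, with the substitution $t=\|x\|^{p-2}$ playing the role of the multiplier $\lambda$ and Lemma \ref{lem:cp} playing the role of Lemma \ref{lem:kkt1}. Writing $t_i=\|x_i\|^{p-2}$ and noting that $p>2$ makes $s\mapsto s^{p-2}$ strictly increasing on $[0,\infty)$, Lemma \ref{lem:cp} (with $x_1,x_2$ interchanged) shows that the desired inequality $g(x_1)<g(x_2)$ follows as soon as one proves the strict comparison $t_2<t_1$. Hence the entire problem reduces to locating $t_1$ and $t_2$ relative to the spectrum of $Q$.

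To pin down $t_1$: since $x_1$ is a local nonglobal minimizer, Lemma \ref{lem:p2} gives $c\neq0$ and $\alpha_1<\alpha_2$, and Lemma \ref{lem:p3} places $t_1$ in the open interval $J:=\left(\max\{-\alpha_2/\sigma,0\},-\alpha_1/\sigma\right)$ with $h'(t_1)>0$. On $J$ one has $-\sigma t_1\in(\alpha_1,\min\{\alpha_2,0\})$, so $Q+\sigma t_1 I$ is nonsingular and $t_1$ is a genuine zero of the secular function $h$; and since $t_1>0$, relation (\ref{pt1}) turns $h'(t_1)>0$ into $p'(t_1)>0$. To locate $t_2$: as $c\neq0$, the origin is not a critical point, so $x_2\neq0$ and $t_2>0$; and since $x_2$ is not a global minimizer, Lemma \ref{lem:p1} gives $\sigma t_2+\alpha_1<0$, that is $t_2<-\alpha_1/\sigma$. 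Thus $t_2\in(0,-\alpha_1/\sigma)$.

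It then remains to compare the two. If $t_2\le\max\{-\alpha_2/\sigma,0\}$, then $t_2$ sits at or below the left endpoint of $J$ while $t_1$ sits strictly inside $J$, so $t_2<t_1$ immediately. Otherwise $t_2\in J$, whence $-\sigma t_2\in(\alpha_1,\min\{\alpha_2,0\})$ misses the spectrum of $-Q$ and $t_2$ is also a zero of $h$, and so of $p$, on $J$. Now I would invoke the strong convexity of $p$ on $(-\alpha_2/\sigma,-\alpha_1/\sigma)$ established by Hsia et al.: a strictly convex function has at most two zeros there, the larger one being the zero with positive derivative; since $p'(t_1)>0$, $t_1$ is this larger zero and therefore $t_2\le t_1$. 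Finally I would rule out equality: if $t_2=t_1$ then, $Q+\sigma t_1 I$ being invertible, (\ref{nc2}) forces $x_2=-(Q+\sigma t_1 I)^{-1}c=x_1$, contradicting $x_1\neq x_2$. Hence $t_2<t_1$, and Lemma \ref{lem:cp} yields $g(x_1)<g(x_2)$.

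The main obstacle, as in the trust-region case, is the ``largest-zero'' comparison, which here needs an extra twist: the strong convexity is available for $p$ rather than for $h$, so one must first pass from $h'(t_1)>0$ to $p'(t_1)>0$ via (\ref{pt1}) before the convexity argument applies, and one must separately exclude the degenerate coincidence $t_2=t_1$ to secure the strict inequality claimed by the theorem.
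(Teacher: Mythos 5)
Your proof is correct and follows essentially the same route as the paper's: reduce to $t_2<t_1$ via Lemma \ref{lem:cp}, establish $c\neq0$ and $t_2<-\alpha_1/\sigma$, and use the strong convexity of $p(t)$ together with $p'(t_1)>0$ (obtained from $h'(t_1)>0$ via (\ref{pt1})) to identify $t_1$ as the largest zero of $p(t)$ in the relevant interval. If anything, you are slightly more explicit than the paper in separating out the case $t_2\le\max\{-\alpha_2/\sigma,0\}$ (which also covers $-\sigma t_2$ being an eigenvalue of $Q$) and in ruling out $t_2=t_1$ via the invertibility of $Q+\sigma t_1 I$ — steps the paper leaves implicit.
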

\begin{proof}
Let $t_i=\|x_i\|^{p-2}$ for $i=1,2$. According to Lemma \ref{lem:cp}, it is sufficient to prove $\|x_2\|<\|x_1\|$ or equivalently, $t_2<t_1$.

We first prove that $c\neq0$. As $x_1$ is a local nonglobal minimizer of ($p$-RS), according to Lemma \ref{lem:p1}, we have
\begin{eqnarray}
\sigma\|x_1\|^{p-2}+\alpha_1<0.\label{lp1}
\end{eqnarray}
Then, it follows from (\ref{lp1}), (\ref{nc1}) and Lemma \ref{lem:p2} that $c_1\neq 0$, and hence $c\neq 0$.

Next, as $c\neq 0$, (\ref{nc1}) implies that $x_1,x_2\neq0$. Then, $p(t)$ is well defined. We can conclude that either $-\sigma t_2\in\{\alpha_1,\alpha_2,\cdots,\alpha_n\}$
or $t_2$ is a zero point of $p(t)$. Since $x_2$ is not a global minimizer of ($p$-RS), it follows from Lemma \ref{lem:p1} that
\[
t_2<-\frac{\alpha_1}{\sigma }.
\]
Since $p(t)$ is strongly convex for $t\in \left({\rm max}\left\{-\frac{\alpha_2}{\sigma},0\right\},-\frac{\alpha_1}{\sigma} \right)$, it has
at most two zero points in this interval. According to Lemma \ref{lem:p3} and (\ref{pt1}), we have
\[
p'(t_1)>0.
\]
Thus $t_1$ is the largest zero point of $p(t)$ in the interval $\left({\rm max}\left\{-\frac{\alpha_2}{\sigma},0\right\},-\frac{\alpha_1}{\sigma} \right)$. Then we have
\[
t_2<t_1<-\frac{\alpha_1}{\sigma}.
\]
The proof is complete.
\end{proof}

In Section \ref{sec3}, we have shown that if there is a local nonglobal minimizer of (TRS), then ${\rm det}~M_1(-\alpha_1)\neq 0$. Similarly, we can show that if there is a local nonglobal minimizer of ($p$-RS) for $p=3$, then ${\rm det}~M_2\left(-\frac{\alpha_1}{\sigma}\right)\neq 0$. Combing this result with the proof of Theorem \ref{thm:cp2}, Lemmas \ref{lem:p1}, \ref{lem:eig} and Corollary \ref{cor:eig2}, we give the following result.
\begin{cor}\label{cor4}
If $\underline{x}$ is the local nonglobal minimizer of the Nesterov-Polyak subproblem (i.e., ($p$-RS) for $p = 3$), then $\underline{t}=\|\underline{x}\|$ is equal to the second largest real generalized eigenvalue of $M_2(t)$.
\end{cor}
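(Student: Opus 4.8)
The plan is to transfer the argument behind Corollary \ref{cor2} from the trust-region setting to the cubic regularization, exploiting the dictionary between critical points of ($p$-RS) and real roots of ${\rm det}~M_2(t)=0$ supplied by Lemma \ref{lem:eig} and Corollary \ref{cor:eig2}. Writing $\underline{t}=\|\underline{x}\|$ (so $t=\|x\|$, as $p=3$), Lemma \ref{lem:eig} already guarantees that $\underline{t}$ is a real generalized eigenvalue of $M_2(t)$, while Corollary \ref{cor3} identifies the largest such eigenvalue as $t^*=\|x^*\|$ for a global minimizer $x^*$. Hence it suffices to prove that $\underline{t}<t^*$ and that the open interval $(\underline{t},t^*)$ contains no real root of ${\rm det}~M_2(t)=0$; together these force $\underline{t}$ to be the second largest real generalized eigenvalue.

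To locate the roots I would import the estimates established inside the proof of Theorem \ref{thm:cp2}, together with Lemma \ref{lem:p3}: the value $\underline{t}$ lies in $\left({\rm max}\{-\alpha_2/\sigma,0\},-\alpha_1/\sigma\right)$, it is the largest zero of the strongly convex function $p(t)$ on that interval, and Lemma \ref{lem:p1} gives $t^*\ge-\alpha_1/\sigma$. I would then classify every real root lying in $(\underline{t},t^*)$ by the three alternatives of Corollary \ref{cor:eig2}. The root $t=0$ is impossible because $\underline{t}>0$. A root with $-\sigma t\in\{\alpha_i\}$ and $i\ge 2$ satisfies $t\le-\alpha_2/\sigma<\underline{t}$, so it cannot lie in the interval. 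A zero of $p(t)$ inside $(\underline{t},-\alpha_1/\sigma)$ is impossible since $\underline{t}$ is the largest zero there, and any zero of $p(t)$ exceeding $-\alpha_1/\sigma$ must equal $t^*$, because $h$ (hence $p$) is strictly decreasing from $+\infty$ to $-\infty$ on $(-\alpha_1/\sigma,\infty)$ and so has a single zero there.

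This leaves exactly one dangerous candidate, the value $t=-\alpha_1/\sigma$, which sits in $(\underline{t},t^*)$; ruling it out is the analogue of the fact ${\rm det}~M_1(-\alpha_1)\neq0$ used before Corollary \ref{cor2}. To dispose of it I would first rewrite \eqref{M22} as the genuine polynomial identity
\[
{\rm det}~M_2(t)=(-1)^{n+1}\sigma^2\left[({\rm det}(Q+\sigma t I))^2\,t^2-\sum_{i=1}^{n}c_i^2\prod_{j\neq i}(\alpha_j+\sigma t)^2\right],
\]
valid for all $t$ once the substitution $x=-(Q+\sigma tI)^{-1}c$ turns $({\rm det}(Q+\sigma tI))^2\,x^Tx$ into $\sum_i c_i^2\prod_{j\neq i}(\alpha_j+\sigma t)^2$. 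Evaluating at $t=-\alpha_1/\sigma$ annihilates the first bracketed term and every summand with $i\neq1$, leaving
\[
{\rm det}~M_2\left(-\frac{\alpha_1}{\sigma}\right)=(-1)^{n}\sigma^2 c_1^2\prod_{j\neq 1}(\alpha_j-\alpha_1)^2.
\]
As in the proof of Theorem \ref{thm:cp2}, a local nonglobal minimizer forces $c_1\neq0$ (via \eqref{lp1}, \eqref{nc1}, and Lemma \ref{lem:p2}) and $\alpha_1<\alpha_2$ (Lemma \ref{lem:p2}), so the right-hand side is nonzero. Thus $-\alpha_1/\sigma$ is not a generalized eigenvalue; in particular $t^*\ne-\alpha_1/\sigma$, so $t^*>-\alpha_1/\sigma>\underline{t}$, and $(\underline{t},t^*)$ contains no generalized eigenvalue at all, whence $\underline{t}$ is the second largest.

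I expect the main obstacle to be precisely this non-degeneracy computation: one must justify passing from the expression \eqref{M22}, a priori valid only where $Q+\sigma tI$ is invertible, to a polynomial identity that can legitimately be evaluated at the singular point $t=-\alpha_1/\sigma$, and then track exactly which factors survive the substitution. By contrast, the classification of the three families of roots relative to $(\underline{t},t^*)$ is routine bookkeeping built directly on Theorem \ref{thm:cp2} and Lemma \ref{lem:p3}.
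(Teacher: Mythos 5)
Your proposal is correct and follows essentially the same route as the paper: the paper's (very terse) argument is precisely to establish ${\rm det}\,M_2\!\left(-\frac{\alpha_1}{\sigma}\right)\neq 0$ (via $c_1\neq 0$ and $\alpha_1<\alpha_2$ from Lemma \ref{lem:p2}) and then combine this with the classification of real generalized eigenvalues in Corollary \ref{cor:eig2}, the ordering of critical points from the proof of Theorem \ref{thm:cp2}, and Lemmas \ref{lem:p1} and \ref{lem:eig}. Your write-up simply fills in the details the paper leaves implicit, including the explicit polynomial evaluation of ${\rm det}\,M_2$ at the singular point, and all the steps check out.
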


\section{Conclusion and discussions}\label{sec6}
This paper shows that there are monotonicity properties for first-order stationary points of trust-region subproblem (TRS) and $p$-regularized subproblem ($p$-RS). Based on these properties, we point out that all the global minimizers of (TRS) share the same Lagrangian multiplier and recover the result that finding a global minimizer of (TRS) (or ($p$-RS)) corresponds to a generalized eigenvalue problem. As the main contribution, we prove that the local nonglobal minimizer of (TRS) (or ($p$-RS)), if it exists, has the second smallest objective function value among all KKT points (critical points). For the Nesterov-Polyak subproblem (i.e., ($p$-RS) with $p=3$), we show for the first time that the local nonglobal minimizer, if it exists, could also be founded by solving a generalized eigenvalue problem.

Our main results for ($p$-RS) or (TRS) may fail to hold for more general optimization problems. 
Consider the problem of minimizing a univariate sextic function:
\[
s(x)=\frac{1}{6}x^6-\frac{21}{10}x^5
+\frac{57}{8}x^4-\frac{1}{12}x^3
-\frac{435}{32}x^2-\frac{297}{32}x.
\]
There are four critical points, denoted by $x_i$, $i=1,2,3,4$. 
As demonstrated in Fig. \ref{fig:1}, $x_1=-0.5$ is neither a local minimizer nor a local maximizer, $x_2=1.5$ is a global minimizer, $x_3=4.5$ is a local maximizer, and $x_4=5.5$ is a local nonglobal minimizer. It is observed from Fig. \ref{fig:1} that, the unique local nonglobal minimum is not the second smallest objective function value among all critical points. 
\begin{figure}[!htb]
\begin{center}
\centering
  \includegraphics[width=0.5\textwidth]{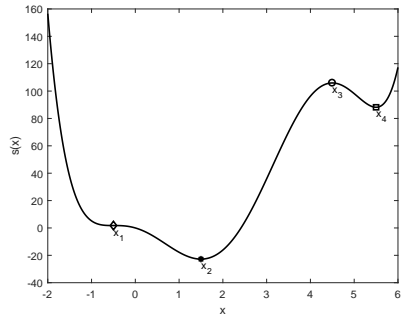}
\end{center}
\caption{Variation of $s(x)$ with four critical points satisfying $s(x_2)<s(x_1)<s(x_4)<s(x_3)$.}
\label{fig:1}
\end{figure}
Moreover, the above unconstrained minimization problem can be reformulated as the following nonconvex quadratic optimization with a quintic constraint:
\begin{eqnarray}
&\mathop{\min}\limits_{x,y\in \mathbb{R}} & xy \nonumber\\
&{\rm s.t.}& y=\frac{1}{6}x^5-\frac{21}{10}x^4
+\frac{57}{8}x^3-\frac{1}{12}x^2
-\frac{435}{32}x-\frac{297}{32}.\nonumber
\end{eqnarray}
One can verify that the constrained problem has four KKT points, corresponding the four critical points of $s(x)$. 

It seems that there is still a little room for possible extension. We conclude this paper with the following conjecture:
\begin{conj}
For ${\rm (CDT)}$  subproblem or nonconvex quadratic optimization with a quartic constraint, 
the smallest local nonglobal minimum has the second smallest objective function value among all KKT points.
\end{conj}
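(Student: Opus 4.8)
The plan is to reproduce, in both settings, the two-ingredient scheme that drives Theorems \ref{thm:kkt2} and \ref{thm:cp2}: first a \emph{monotonicity property} ordering the objective values of KKT points by their Lagrange multipliers, and second a \emph{localization} of the local nonglobal minimizers inside a distinguished region of multiplier space, so that the smallest such minimizer becomes the extreme non-global KKT point and monotonicity delivers the second-smallest value. I would begin by writing the KKT system and parametrizing KKT points through their multipliers. For the quartic-constraint problem $\min\{f(x):\Phi(x)\le 0\}$ there is a single multiplier $\lambda\ge 0$ and, after reducing $Q$ to diagonal form, a scalar secular equation analogous to \eqref{secular}; for (CDT) there are two multipliers $(\lambda,\mu)\ge 0$ and a two-parameter secular surface.

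The first key step is a monotonicity lemma. For the quartic case I would multiply the stationarity identity $Qx+c+\lambda\nabla\Phi(x)=0$ by $x_1^T$ and $x_2^T$, subtract, and bound the cross term by Cauchy--Schwarz together with the complementarity condition, exactly as in the proofs of Lemma \ref{lem:kkt1} and Lemma \ref{lem:cp}; the target is an implication ``larger $\lambda$ (equivalently larger feasible-radius parameter) $\Rightarrow$ smaller $f$.'' For (CDT) the correct statement is a \emph{partial-order} monotonicity in the sense of Chen and Yuan \cite{CY00}: if $(\lambda,\mu)\ge(\lambda',\mu')$ componentwise for two KKT points, then their objective values are reversed. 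The same multiply-subtract-and-bound computation should reproduce that inequality.

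The second step is to characterize and place the local nonglobal minimizers. Extending Corollaries \ref{cor0} and \ref{cor3}, the global minimizer should carry the componentwise-largest multiplier, so every non-global KKT point has a strictly smaller multiplier in at least one coordinate. I would then identify the smallest local nonglobal minimum with the KKT point whose multiplier is maximal among all non-global ones---in the quartic case, the largest root of the secular equation below the threshold determined by $\alpha_1$ at which the first-derivative test is positive, mirroring Lemma \ref{lem:p3}---and invoke monotonicity. Since (CDT) need not possess a unique local nonglobal minimizer (unlike (TRS)), the multiplicity would be handled by taking the infimum of the local nonglobal values and arguing it is attained at a KKT point of extremal multiplier.

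The hard part will be the comparison against KKT points whose multipliers are \emph{incomparable} to that of the candidate minimizer. The monotonicity of \cite{CY00} is only a partial order, so two KKT points with $\lambda>\lambda'$ but $\mu<\mu'$ cannot be ordered directly; this is precisely the obstruction that keeps the statement a conjecture. Closing this gap would likely require either a finer, two-parameter generalized-eigenvalue analysis of the secular surface---in the spirit of Lemma \ref{lem:l3} and Corollary \ref{cor:eig1}, but for the matrix pencil carrying both constraints---that rules out a non-global KKT value strictly between the global minimum and the smallest local nonglobal minimum, or a continuity argument that tracks KKT branches as the two constraint levels vary and shows no intermediate branch can emerge. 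The quartic-constraint case is more favorable, as it retains a single multiplier and a one-dimensional secular function; there the main residual difficulty is checking that this secular function is still strongly convex (or at least has the requisite sign pattern of its first derivative) on each eigen-gap interval, the quartic analogue of the convexity exploited in Section \ref{sec4}.
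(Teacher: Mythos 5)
This statement is posed in the paper as an open conjecture; the authors give no proof of it, so there is no argument of theirs to compare yours against. What you have written is a research plan rather than a proof, and by your own admission it does not close: the central difficulty for (CDT) --- that the Chen--Yuan monotonicity is only a partial order on the multiplier pairs, so KKT points with incomparable multipliers $(\lambda>\lambda',\ \mu<\mu')$ cannot be ranked --- is exactly the step you leave unresolved, and it is the whole content of the conjecture. Without either a total ordering of objective values over all KKT points or some other device to exclude a non-global KKT value strictly between the global minimum and the smallest local nonglobal minimum, the scheme of Theorems \ref{thm:kkt2} and \ref{thm:cp2} does not transfer. The quartic-constraint half has the same status: you correctly note that one must verify convexity (or the right sign pattern) of the analogue of $\varphi$ on each eigen-gap interval and an analogue of Lemma \ref{lem:l1} placing the local nonglobal multiplier in $(-\alpha_2,-\alpha_1)$ as the \emph{largest} root there, but you do not carry out either verification. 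A further caution: the paper's own sextic counterexample in Section \ref{sec6} shows that the two-ingredient scheme can fail once the constraint degree grows, precisely because a critical point that is neither a local minimizer nor maximizer can slip between the global and local nonglobal values; any genuine proof for the quartic case must explain why such saddle-type KKT points cannot interleave, and your proposal does not address this. So the proposal identifies the correct obstacles but proves nothing beyond what the paper already establishes for (TRS) and ($p$-RS).
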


\end{document}